\newcommand{\A}{\mathcal{A}}
\newcommand{\C}{\mathbb{C}}
\newtheorem{theorem}{Theorem}[section]
\newtheorem{remark}{Remark}[section]
\newtheorem{lemma}{Lemma}[section]
\DeclareMathOperator{\RE}{Re}
\begin{document}

\title[ Radius Constants]{Radii Constants  for Functions with Fixed Second Coefficient}

\author[ Shalini Rana, Om Ahuja, Naveen Kumar Jain]{ Shalini Rana, Om Ahuja, Naveen Kumar Jain}

\address{Department of Mathematics\\ University of Delhi, Delhi--110 007, India}
\email{shalinirana3010@gmail.com}

\address{	Department of Mathematical Sciences,\\
	Kent State University, Ohio, 44021, U.S.A }
	\email{oahuja@kent.edu}

\address{Department of Mathematics\\
	Aryabhatta College,  Delhi, 110021, India}
	\email{naveenjain@aryabhattacollege.ac.in}

\begin{abstract}
We introduce three classes of analytic functions with fixed second coefficient which are defined using the class $\mathcal{P}$ of analytic functions with positive real part. The objective is to determine radii such that the three classes are contained in various subclasses of starlike functions. The radii estimated in the present investigation are better than the radii obtained earlier. Furthermore, connections with previous known results are shown.
\end{abstract}

\subjclass{30C45, 30C80}

\keywords{Univalent functions . Subordination . Starlike functions . Radius of starlikeness . Radius constants}

\maketitle

\section{Introduction}
Let $\Delta$ denote the open unit disc in $\mathbb{C}$ and  the class $\A$ be defined as collection of all analytic functions in $\Delta$ satisfying $f(0)=1$ and $f'(0)=1$. The class $\mathcal{S}$ is defined to be collection of  univalent functions in class $\A$.
  The well known Bieberbach theorem states that for a univalent function $ f(z)=z+ a_2 z^{2}+...$ the bound on second coefficient, that is $|a_2|\leq2$, plays an important role in the study of  univalent function theory. This bound attracted the interest of many mathematicians which led to the investigation of the class $A_b$ consisting of the  functions of the form $ f(z)=z+ a_2 z^{2}+...$, $|a_2|=b$ for a fixed  $b$ with $0 \le b \le 1$. For $n \in \mathbb{N}$ and $0 \le b \le 1$, let  $\A_{nb}$  be the class  of analytic functions of the form $ f(z)=z+ nb z^{2}+...$ for $z \in \Delta$ such that $\A_b:=\A_{1b}$. The study of  class $A_b$ was initiated as early as 1920 by  Gronwall \cite{Gronwall}. He determined the growth and distortion estimates for the class of univalent functions with fixed second coefficients. In 2011, Ali \emph{et al.} \cite{ANV} obtained various results for the class of functions with fixed second coefficients by applying the theory of second order differential subordination. Later, Lee \emph{et al.} \cite{lee} investigated certain applications of differential subordination for such functions. Kumar et al. \cite{kumar1} determined the best possible estimates on initial coefficients of Ma-Minda type univalent functions. Ali \emph{et al.} \cite{kum}  obtained sharp radii of starlikeness for  certain classes of functions with fixed second coefficients. A survey on functions with fixed initial coefficient can be found in \cite{ali1}.

Let $f$ and $h$ be analytic functions in $\Delta$. Then $f$ is subordinate to $h$, that is $f\prec h$ if $f(z)=h(w(z))$ for some analytic function $w$, with $w(0)=0$ and $|w(z)|<1$. In particular, if $h\in\mathcal{S}$, then $f\prec h$ provided $f(0)=h(0)$ and $f(\Delta)\subset h(\Delta)$.   Several well known subclasses of starlike  functions were characterized by subordination of $zf'/f$ to some function in $\mathcal{P}$. Using a function $\psi$, univalent in unit disc $\Delta$ with $\RE(\psi(z))>0$, starlike with respect to $\psi(0)=1$, symmetric about the real axis and $\psi'(0)>0$, Ma and Minda \cite{MM} gave a unified treatment for several subclasses of starlike  functions by studying the class $\mathcal{S}^*(\psi)=$\{ $f \in \A: zf'(z)/f(z) \prec \psi(z)$\}.  Recently, Anand \emph{et al.} \cite{anand} also obtained some results for  a class of analytic functions defined using the function $\psi$. Several authors considered the class $\mathcal{S}^*(\psi)$ for various choices of the function $\psi.$ For $-1<B<A\le 1,$ let $\psi(z)=(1+Az)/(1+Bz)$ be a convex function whose image is symmetric with
respect to the real axis. For this function $\psi$, the class $\mathcal{S}^*(\psi)$ reduces  to the class $\mathcal{S}^*[A,B]$ \cite{Jan} (see also \cite{sri1})  of Janowski starlike functions. Other well-known choices for $\psi(z)$ include $\sqrt{1+z}$, $\exp{z}$, $1+(4/3)z+(2/3)z^2$, $1+sin(z)$, $z+ \sqrt{1+z^2}$, $1+z-z^3/3$, $2/(1+e^{-z})$. For a brief survey of these classes, reader may refer to \cite{goodman, sri}.\\

Consider the class $\mathcal{P}(\alpha)$ of analytic functions $p(z)=1+b_1 z+b_2 z^2+...$ satisfying $\RE\{p(z)\}>\alpha$ where $0 \le \alpha <1$. It can be noted that the class $\mathcal{P}=\mathcal{P}(0)$. For $p \in \mathcal{P}(\alpha)$ \cite{nehari}, we have $ |b_1| \le 2(1-\alpha).$
Denote by $\mathcal{P}_b(\alpha)$ the subclass of $\mathcal{P}(\alpha)$ consisting of functions of the form  $p(z)=1+2b(1-\alpha)z+...$ for $b \in [0,1]$ such that $\mathcal{P}_b = \mathcal{P}_b(0)$.
For any two subclasses $\mathcal{M}$ and $\mathcal{N}$ of $\A$, the $\mathcal{N}$ radius for the class $\mathcal{M}$ is the largest number $\rho \in (0,1)$ such that $r^{-1}f(rz) \in \mathcal{N},$ for all $f \in \mathcal{M}$ and for $0<r<\rho.$
Among several studies available on radius problems, many results involving  ratio between two classes of functions, where one of them belong to some particular subclasses of $\A$   has been a center of major focus in geometric function theory and can be seen in \cite{ratti,ratti1,gregor,gregor1,gregor2}.
Recently, Lecko and Ravichandran\cite{lecko} estimated certain best possible radii for the classes of function $g\in \mathcal{A}$  satisfying the  conditions $(i)$  $g/h\in\mathcal{P}$ where $h/(zp)\in\mathcal{P}$ or  $h/(zp)\in\mathcal{P}(1/2)$ $(ii)$ $g/(zp)\in\mathcal{P}$.
Motivated by above work, we define three classes of functions by making use of the classes $\A_{6b}$, $\A_{4c}$ and $\mathcal{P}$ as follows:
\begin{equation*}
H^1_{b,c}=\{f \in \A_{6b}: \frac{f}{g} \in \mathcal{P} \  and \  \frac{g}{zp} \in \mathcal{P} \  where \  g \in \A_{4c}, \  p \in \mathcal{P}  \}
\end{equation*}
\begin{equation*}
H^2_{b,c}=\{f \in \A_{5b}: \frac{f}{g} \in \mathcal{P} \  and \  \frac{g}{zp} \in \mathcal{P}(1/2) \  where \  g \in \A_{3c}, \  p \in \mathcal{P}  \}\end{equation*}
and
\begin{equation*}
H^3_b=\{f \in \A_{4b}:  \frac{f}{zp} \in \mathcal{P} \ where \  p \in \mathcal{P}  \}
\end{equation*}
where $b \in [0,1]$ and $c \in [0,1]$.

The results presented in this paper are nice extensions of radii estimates results in  \cite{lecko} along with some improved radii. It includes  radii estimates for functions in the classes $H^1_{b,c}$, $H^2_{b,c}$ and $H^3_b$ to belong to several subclasses of normalized analytic functions $\mathcal{A}$ like starlike functions of order $\alpha$, starlike functions associated with lemniscate of Bernoulli, parabolic starlike functions, exponential function,   cardioid,  sine function, lune,  a particular rational function, nephroid  and  modified sigmoid function.

\section{Analysis and mapping of $zf'(z)/f(z)$ for $H^1_{b,c}$, $H^2_{b,c}$ and $H^3_b$}
\noindent
In this section, we investigate extremal functions for all three classes $H^1_{b,c}$, $H^2_{b,c}$ and $H^3_b$ which demonstrates the fact that the classes are non empty. Furthermore, we obtain the disc centered at 1, containing the image of the disc $\Delta$
under the mapping  $zf'/f$ where $f$ belongs to each of these classes.
We begin by stating the following lemmas by McCarty:
\begin{lemma}\label{lm1} \cite{carty}
Let $b \in [0,1]$ and $0 \le \alpha <1$. If $p \in \mathcal{P}_b(\alpha)$, then for $|z|=r<1$,
\begin{equation*}
    \left | \frac{zp'(z)}{p(z)}\right | \le \frac{2(1-\alpha)r}{1-r^2} \frac{b r^2+2r+b}{(1-2\alpha)r^2+ 2b(1-\alpha) r+1}.
\end{equation*}
\end{lemma}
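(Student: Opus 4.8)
The plan is to pass from the class $\mathcal{P}_b(\alpha)$ to Schwarz functions with prescribed first derivative at the origin, turn $zp'(z)/p(z)$ into an explicit rational expression in such a function, and then reduce everything to the Schwarz lemma, the differential Schwarz--Pick inequality, and a final one--variable extremal estimate. Recall that $p\in\mathcal{P}_b(\alpha)$ precisely when $\omega:=(p-1)/(p+1-2\alpha)$ is analytic on $\Delta$ with $\omega(0)=0$ and $|\omega(z)|<1$; since $p(z)=1+2b(1-\alpha)z+\cdots$, one finds $\omega'(0)=b$. Inverting, $p=(1+(1-2\alpha)\omega)/(1-\omega)$, and logarithmic differentiation gives
\[
\frac{zp'(z)}{p(z)}=\frac{2(1-\alpha)\,z\,\omega'(z)}{\bigl(1-\omega(z)\bigr)\bigl(1+(1-2\alpha)\omega(z)\bigr)}.
\]
To exploit that $\omega'(0)=b$, apply the Schwarz--Pick inequality to the self--map $\phi:=\omega(z)/z$ of $\Delta$ (with $\phi(0)=b$): this shows that $\psi:=(\omega-bz)/(z-b\omega)$ is again a Schwarz function and that $\omega(z)=z\bigl(b+\psi(z)\bigr)/\bigl(1+b\psi(z)\bigr)$. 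Substituting this and writing $u:=b+\psi(z)$, $v:=1+b\psi(z)$, the common factor $(1+b\psi)^{2}$ cancels and
\[
\frac{zp'(z)}{p(z)}=\frac{2(1-\alpha)\bigl[z\,uv+(1-b^{2})z^{2}\psi'(z)\bigr]}{(v-zu)\bigl(v+(1-2\alpha)zu\bigr)}.
\]

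Next I would bound the right--hand side for $|z|=r$ using only three facts about the free Schwarz function $\psi$: $|\psi(z)|\le r$, the bound $|\psi'(z)|\le\bigl(1-|\psi(z)|^{2}\bigr)/(1-r^{2})$, and the elementary inequality $|1+b\psi(z)|\ge r\,|b+\psi(z)|$, which follows by writing $|1+bt|^{2}-r^{2}|b+t|^{2}$ as a real quadratic in $\RE t$ and $|t|^{2}$ and checking that its minimum over $|t|\le r$ equals $(1-r^{2})\bigl[(1-r)^{2}+2r(1-b)\bigr]\ge0$. The numerator is affine in $\psi'(z)$, hence its modulus is largest when $|\psi'(z)|$ meets its bound with the phase aligned, giving the estimate $r|u||v|+(1-b^{2})r^{2}\bigl(1-|\psi(z)|^{2}\bigr)/(1-r^{2})$, which does not depend on $\arg z$. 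Since $|1-2\alpha|\le1$, the displayed inequality gives $|v|\ge r|u|\ge|1-2\alpha|r|u|$, and minimising $|v-zu|\,|v+(1-2\alpha)zu|$ over $|z|=r$ with $\psi(z)$ fixed reduces to minimising a product of two positive linear functions of $\cos\bigl(\arg(zu/v)\bigr)$; in every case $\alpha\in[0,1)$ the minimum is attained at $zu/v>0$ and equals $\bigl(|v|-r|u|\bigr)\bigl(|v|+(1-2\alpha)r|u|\bigr)$. (This two--term optimisation is essential when $\alpha<1/2$: bounding the two denominator factors separately yields a strictly weaker estimate, one that fails to give the sharp constant even at the extremal configuration.) Combining, and setting $t:=\psi(z)$ with $|t|\le r$,
\[
\left|\frac{zp'(z)}{p(z)}\right|\ \le\ 2(1-\alpha)\,\Psi(t),\qquad
\Psi(t):=\frac{r\,|b+t|\,|1+bt|+\dfrac{(1-b^{2})\,r^{2}\,(1-|t|^{2})}{1-r^{2}}}{\bigl(|1+bt|-r|b+t|\bigr)\bigl(|1+bt|+(1-2\alpha)r|b+t|\bigr)}.
\]

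It then remains to show that $\Psi$ attains its maximum over $\{|t|\le r\}$ at the real boundary point $t=r$, where, using $(1+br)-r(b+r)=1-r^{2}$ and $(b+r)(1+br)+(1-b^{2})r=br^{2}+2r+b$, one computes directly that
\[
\Psi(r)=\frac{r\,(br^{2}+2r+b)}{(1-r^{2})\bigl[(1-2\alpha)r^{2}+2b(1-\alpha)r+1\bigr]},
\]
so that $\bigl|zp'(z)/p(z)\bigr|\le 2(1-\alpha)\Psi(r)$ is exactly the asserted inequality; it is sharp, taken by $\psi(z)=z$, i.e.\ $\omega(z)=z(b+z)/(1+bz)$ and $p(z)=(1+(1-2\alpha)\omega(z))/(1-\omega(z))$, at $z=r$ (here one uses $(1+br)^{2}-r^{2}(b+r)^{2}=(1-r^{2})(1+2br+r^{2})$). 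I expect this last maximisation to be the main obstacle: $\Psi$ depends genuinely on the two real parameters $\RE t$ and $|t|$, and the numerator and denominator of $zp'(z)/p(z)$ are not individually extremised at the critical configuration, so one cannot decouple them and must analyse $\Psi$ on the whole closed disc. Concretely, one would reduce, via a further monotonicity argument, to the case $t\in[0,r]$ real and then settle the resulting one--variable inequality by elementary calculus; establishing these monotonicities rigorously is where the real work lies.
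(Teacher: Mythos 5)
The paper offers no proof of this lemma to compare against: it is quoted directly from McCarty \cite{carty}. So your attempt has to stand on its own, and as written it is a proof plan with the decisive step missing. The preparatory work is sound: the representation $p=(1+(1-2\alpha)\omega)/(1-\omega)$ with $\omega(0)=0$, $\omega'(0)=b$, the further substitution $\omega(z)=z\bigl(b+\psi(z)\bigr)/\bigl(1+b\psi(z)\bigr)$ with $\psi$ a Schwarz function, the resulting identity for $zp'/p$, the inequality $|1+bt|\ge r|b+t|$ on $|t|\le r$, and the phase-minimisation of $|v-zu|\,|v+(1-2\alpha)zu|$ are all correct (for the last one you assert, rather than verify, that the minimum sits at $zu/v>0$ also when $\alpha>1/2$; this is true — the quadratic in $\cos\theta$ is then convex with vertex at or beyond $1$ because $\beta(1-\rho)^2-(1+\beta\rho)^2\le0$ for $\beta=1-2\alpha<0$ — but it needs that small computation).

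The genuine gap is the final maximisation: you never prove that $\Psi(t)\le\Psi(r)$ for all $|t|\le r$, i.e.\ that the two-variable function $\Psi$ (depending on $\RE t$ and $|t|$) attains its maximum at the boundary point $t=r$. You explicitly defer it ("I expect\dots", "where the real work lies"). Without it, your chain of estimates yields only $|zp'(z)/p(z)|\le 2(1-\alpha)\sup_{|t|\le r}\Psi(t)$, which is not the stated bound; and this maximisation is precisely the substantive content of McCarty's lemma, since everything before it is routine Schwarz--Pick bookkeeping. Numerical sampling suggests the claim is true, so the route does not obviously fail, but to have a proof you must carry out the two monotonicity arguments you only sketch (first that, at fixed $|t|$, $\Psi$ is largest for $t$ on the positive real axis, then the one-variable estimate on $t\in[0,r]$, where numerator and denominator vary simultaneously and cannot be treated separately), or else fall back on McCarty's original argument, which is what the paper itself does by citation.
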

\begin{lemma}\cite{car}\label{lm2}
Let $b \in [0,1]$ and $0 \le \alpha <1$. If $p \in \mathcal{P}_b(\alpha)$, then for $|z|=r<1$,
\begin{equation*}
    \RE\left ( \frac{zp'(z)}{p(z)}\right ) \geq
    \begin{cases} \frac{-2(1-\alpha)r}{1+2\alpha b r+(2\alpha-1)r^2} \frac{b r^2+2r+b}{r^2+ 2b r+1}\,\, \text{if} \,\, R_{\alpha}\leq R_b\\
    \frac{2\sqrt{\alpha C_1}-C_1-\alpha}{1-\alpha}\quad\quad \quad\quad\,\,\,\,\text{if}\,\,  R_{\alpha}\geq R_b
    \end{cases},
\end{equation*}
where $R_b=C_b-D_b$, $R_{\alpha}=\sqrt{\alpha C_1}$, and $r=|z|<1$;
\begin{equation}\label{eqn1}
C_b=\frac{(1+br)^2-(2\alpha-1)(b+r)^2r^2}{(1+2br+r^2)(1-r^2)}\quad \text{and}\quad D_b=\frac{2(1-\alpha)(b+r)(1+br)r}{(1+2br+r^2)(1-r^2)}
\end{equation}

\end{lemma}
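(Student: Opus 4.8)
The plan is to reduce the inequality to an elementary extremal problem for Schwarz functions. Since $p\in\mathcal{P}_b(\alpha)$ is equivalent to $(p-\alpha)/(1-\alpha)\in\mathcal{P}_b$, one may write
\[
p(z)=\frac{1+(1-2\alpha)w(z)}{1-w(z)},\qquad w(z)=z\phi(z),
\]
where $w$ is analytic, $w(0)=0$, $|w(z)|<1$ and $w'(0)=b$, so that $\phi$ maps $\Delta$ into $\overline{\Delta}$ with $\phi(0)=b$. Logarithmic differentiation then gives the closed form
\[
\frac{zp'(z)}{p(z)}=\frac{2(1-\alpha)\,zw'(z)}{\bigl(1-w(z)\bigr)\bigl(1+(1-2\alpha)w(z)\bigr)}=\frac{2(1-\alpha)\bigl(z\phi(z)+z^{2}\phi'(z)\bigr)}{\bigl(1-z\phi(z)\bigr)\bigl(1+(1-2\alpha)z\phi(z)\bigr)}.
\]

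Fix $r=|z|<1$. The Schwarz--Pick lemma applied to $\phi$ with $\phi(0)=b$ confines $\phi(z)$ to the closed disc centred at $b(1-r^{2})/(1-b^{2}r^{2})$ of radius $(1-b^{2})r/(1-b^{2}r^{2})$, while Schwarz--Pick applied to $\phi$ as a self-map gives $|\phi'(z)|\le\bigl(1-|\phi(z)|^{2}\bigr)/(1-r^{2})$; the admissible pairs $\bigl(\phi(z),\phi'(z)\bigr)$ fill the classical variability region whose extreme boundary is furnished by Blaschke products of degree at most two. Since the displayed expression for $zp'/p$ is affine in $\phi'(z)$, minimising in the $\phi'(z)$-direction (choosing it of extremal modulus and of the argument making that contribution most negative) is immediate, and the infimum of $\RE\bigl(zp'(z)/p(z)\bigr)$ over $\mathcal{P}_b(\alpha)$ at a point of modulus $r$ thereby reduces to minimising an explicit real-valued function of $\arg z$ and of the point $\zeta=\phi(z)$ ranging over the bounding circle.

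The remaining minimisation is where I expect the real work to lie. Its stationarity conditions are rational in $\arg z$ and in the angular parameter of $\zeta$, and, once everything is rewritten using the abbreviations $C_b$, $D_b$ of \eqref{eqn1}, the location of the minimiser jumps as $r$ crosses a threshold determined by $b$ and $\alpha$. In the regime $R_\alpha\le R_b$ the minimum is attained at a boundary configuration where the objective reduces to
\[
-\,\frac{2(1-\alpha)r}{1+2\alpha b r+(2\alpha-1)r^{2}}\cdot\frac{br^{2}+2r+b}{r^{2}+2br+1},
\]
while in the complementary regime $R_\alpha\ge R_b$ the minimiser moves to a different configuration at which the objective takes the constant value $-(\sqrt{C_{1}}-\sqrt{\alpha})^{2}/(1-\alpha)=(2\sqrt{\alpha C_{1}}-C_{1}-\alpha)/(1-\alpha)$, with $C_{1}=C_b\big|_{b=1}=(1+(1-2\alpha)r^{2})/(1-r^{2})$. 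The two candidate values agree along the threshold $R_\alpha=R_b$, which fixes the case split, and exhibiting the extremal function in each regime simultaneously gives sharpness. The main obstacle is therefore not the reduction but this last optimisation: controlling the transition between the two regimes and certifying that no other configuration in the variability region produces a smaller value.
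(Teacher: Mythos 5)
The paper does not prove this lemma at all: it is quoted verbatim from McCarty \cite{car}, so the only fair comparison is with a complete argument, and yours is not one. Your reduction is fine and is essentially the standard first step: since $p\in\mathcal{P}_b(\alpha)$ iff $(p-\alpha)/(1-\alpha)\in\mathcal{P}_b$, the representation $p=(1+(1-2\alpha)w)/(1-w)$ with $w=z\phi$, $\phi(0)=b$, and the identity $zp'/p=2(1-\alpha)zw'/\bigl((1-w)(1+(1-2\alpha)w)\bigr)$ are correct, as is the identity $(2\sqrt{\alpha C_1}-C_1-\alpha)/(1-\alpha)=-(\sqrt{C_1}-\sqrt{\alpha})^2/(1-\alpha)$ with $C_1=(1+(1-2\alpha)r^2)/(1-r^2)$. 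But everything that actually constitutes the lemma --- the derivation of the two candidate values, the identification of the threshold $R_\alpha=R_b$, and the certification that the asserted configurations are global minimizers over the whole variability region --- is announced rather than carried out (``I expect the real work to lie'' there, ``the minimiser moves to a different configuration''). Restating the conclusion as the outcome of an unperformed optimisation is the gap; the case dichotomy in McCarty's result comes precisely from deciding whether a certain interior critical point of the reduced one-variable problem lies in the admissible range (giving the constant value $(2\sqrt{\alpha C_1}-C_1-\alpha)/(1-\alpha)$) or whether the minimum is attained at the endpoint (giving $-D_b\cdot$(the rational factor) type expressions built from $C_b$, $D_b$), and none of that analysis appears.

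There is also a quantitative flaw in the setup that would bite you if you tried to finish along the stated lines: you describe the admissible pairs $\bigl(\phi(z),\phi'(z)\bigr)$ by combining the disc for $\phi(z)$ coming from Schwarz--Pick with $\phi(0)=b$ together with the unconstrained derivative bound $|\phi'(z)|\le(1-|\phi(z)|^2)/(1-r^2)$. The second inequality ignores the normalisation $\phi(0)=b$, so the region cut out by these two conditions is strictly larger than the true joint variability region (which one gets from the two-point Schwarz--Pick lemma, e.g.\ by conjugating with the disc automorphism sending $b$ to $0$ and writing $\phi=\sigma^{-1}(z\chi)$). Since your objective is affine in $\phi'(z)$ and you propose to push $\phi'(z)$ to the extreme of its modulus bound, minimising over the enlarged region can produce a value strictly below the sharp bound of the lemma, in which case the argument only yields a weaker inequality and cannot reproduce the stated cases. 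So both the constraint set and the final minimisation need to be done exactly; as written, the proposal is a plausible reduction plus the lemma's statement, not a proof.
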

\subsection{\textbf{The class $H^1_{b,c}$}}
\noindent
Let the functions $f$ and $g$ whose Taylor series expansions are given by $f(z)=z+ f_1 z^2+...$ and $g(z)=z+g_1 z^2+...$ be such that $\RE\{f/g\}>0$ and $\RE\{g/(zp)\}>0$ where $p \in \mathcal{P}$ is represented by $p(z)=1+ q z+...$ Now consider, $ {g}/{(zp)}= 1+(g_1-q)z+...$ where $|g_1-q| \le 2$ and $|q| \le 2$ which gives $|g_1| \le 4.$ Also, ${f}/{g}=1+(f_1-g_1)z+....$
where $|f_1-g_1| \le 2$ and hence $|f_1| \le 6$.
Hence, for $b , c \in [0,1]$, we consider the class involving the function $f$ and $g$ with fixed second coefficient whose Taylor series expansion is given by $f(z)=z+6b z^2+...$ and $ g(z)=z+4c z^2 +... $ such that $f \in \A_{6b}$ and $g \in \A_{4c}$.
If the function  $f \in$ $H^1_{b,c}$, then there exists an element $g \in \A_{4c}$ and $p \in \mathcal{P}$ such that $f/g \in \mathcal{P}$ and $g/(zp) \in \mathcal{P}$. Define
\begin{equation*}
    h(z)= \frac{f(z)}{g(z)}= 1+ (6b-4c)z+...\quad
\text{and} \quad    k(z)= \frac{g(z)}{zp(z)}= 1+ (4c-q)z+...
\end{equation*}
where $|6b-4c| \le 2$ and $|4c-q|\le 2$. Therefore, we observe that $h \in \mathcal{P}_{(6b-4c)/2}$, $k \in \mathcal{P}_{(4c-q)/2}$, $p \in \mathcal{P}_{q/2}$ and $f$ can be expressed as $f(z)= z p(z) h(z) k(z)$. Then a calculation shows that
\begin{equation}\label{eq0.2}
    \left| \frac{zf'(z)}{f(z)}-1 \right| \le \left| \frac{zh'(z)}{h(z)} \right| +\left| \frac{zk'(z)}{k(z)} \right| +\left| \frac{zp'(z)}{p(z)} \right|.
\end{equation}
 For $d=|6b-4c| \le 2$, $s=|4c-q| \le 2$ and $\alpha=0$, using  lemma \ref{lm1}, we get
\begin{equation}\label{eq0.3}
 \left| \frac{zh'(z)}{h(z)}\right|   \le \frac{r}{(1-r^2)} \frac{(dr^2+4r+d)}{(r^2+dr+1)},\quad
\left| \frac{zk'(z)}{k(z)}\right|   \le \frac{r}{(1-r^2)} \frac{(sr^2+4r+s)}{(r^2+sr+1)},
\end{equation}
and
\begin{equation}\label{eq0.4}
    \left| \frac{zp'(z)}{p(z)}\right|   \le \frac{r}{(1-r^2)} \frac{(q r^2+4r+q)}{(r^2+q r+1)}.
\end{equation}
By \eqref{eq0.2}, \eqref{eq0.3} and \eqref{eq0.4}, it follows that
\begin{equation}\label{eq0.5}
       \left| \frac{zf'(z)}{f(z)}-1 \right| \le \frac{ \splitfrac
       {r[(dr^2+4r+d)(r^2+sr+1)(r^2+qr+1)+(sr^2+4r+s)}{(r^2+dr+1)(r^2+qr+1)+(qr^2+4r+q)(r^2+dr+1)(r^2+sr+1)]}}
       {(1-r^2)(r^2+dr+1)(r^2+sr+1)(r^2+qr+1)}.
\end{equation}
Define the functions $f_1$, $g_1$  and $p_1:\Delta \to \mathbb{C}$ by
\begin{equation}\label{eq0.6}
f_1(z)= \frac{z(1-qz+z^2)(1-(4c-q)z+z^2)(1-(6b-4c)z+z^2)}{(1-z^2)^3},
\end{equation}
\begin{equation}\label{eq0.7}
g_1(z)= \frac{z(1-qz+z^2)(1-(4c-q)z+z^2)}{(1-z^2)^2},\quad
\text{and}\quad    p_1(z)= \frac{(1-qz+z^2)}{(1-z^2)},
\end{equation}
\[\text{where} \quad  |6b-4c| \le 2,  \quad  |4c-q| \le 2\quad \text{and}\quad |q| \le 2.\]
By \eqref{eq0.6} and \eqref{eq0.7}, we have
\begin{equation*}
\frac{f_1(z)}{g_1(z)}=\frac{(1-(6b-4c)z+z^2)}{(1-z^2)}=\frac{1+w_1(z)}{1-w_1(z)}
\end{equation*}
\begin{equation*}
\frac{g_1(z)}{z p_1(z)}=\frac{(1-(4c-q)z+z^2)}{(1-z^2)}=\frac{1+w_2(z)}{1-w_2(z)}
 \,\,\text{and}\,\,
p_1(z)=\frac{(1-q z+z^2)}{(1-z^2)}=\frac{1+w_3(z)}{1-w_3(z)}
\end{equation*}
where
\begin{equation*}
w_1(z)=\frac{z(z-(6b-4c)/2)}{(1-z(6b-4c)/2)},\,
w_2(z)=\frac{z(z-(4c-q)/2)}{(1-z(4c-q)/2)}
\,\,\text{and}\,\,
w_3(z)=\frac{z(z-q/2)}{(1-z q/2)}
\end{equation*}
which are analytic functions satisfying the conditions of Schwarz lemma in the unit disc $\Delta$ and hence $\RE(f_1/g_1)>0$, $\RE(g_1/(z p_1))>0$ and $\RE(p_1)>0$. Thus, $f_1/g_1 \in \mathcal{P}_{(6b-4c)/2}$, $g_1/(z p_1) \in \mathcal{P}_{(4c-q)/2}$ and $p_1 \in \mathcal{P}_{(q/2)}$. Thus the function $f_1 \in H^1_{b,c}$ and the class $H^1_{b,c}$ is non-empty.
\begin{equation}\label{eq0.8}
  F_1(z)=\frac{z(1-z^2)^3}{(1-qz+z^2)(1-(4c-q)z+z^2)(1-(6b-4c)z+z^2)}
\end{equation}
and $f_1(z)$ are extreme functions for the class $H^1_{b,c}$ provided $q \le 2, \  c \ge q/4$ and $b \ge 2c/3.$

\subsection{\textbf{The class $H^2_{b,c}$}}
\noindent
Let  $f$ and $g$ be functions with Taylor series expansions  $ f(z)=z+ f_1 z^2+... $ and $g(z)=z+g_1 z^2+...$
 such that $\RE\{f/g\}>0$ and $\RE\{g/(zp)\}> 1/2$ where $p \in \mathcal{P}$ is represented by $  p(z)=1+ q z+... $
Now consider, $g/zp = 1+(g_1-q)z+...$
where $|g_1-q| \le 1$ (lemma 2, pg 33 et al \cite{owa}) and $|q| \le 2$ which gives $|g_1| \le 3.$ Also, $f/g =1+(f_1-g_1)z+....$
where $|f_1-g_1| \le 2$ and hence $|f_1| \le 5$.
Therefore, we consider the class involving the function $f$ and $g$ with fixed second coefficient whose Taylor series expansion is given by $ f(z)=z+5b z^2+...$ and $g(z)=z+3c z^2 +...$
where $b \in [0,1]$ and $c \in [0,1]$ such that $f \in \A_{5b}$ and $g \in \A_{3c}$. If the function  $f \in$ $H^2_{b,c}$ then there exists an element $g \in \A_{3c}$ and $p \in \mathcal{P}$ such that $f/g \in \mathcal{P}$ and $g/(zp) \in \mathcal{P}(1/2).$ Define
\begin{equation*}
    h(z)= \frac{f(z)}{g(z)}= 1+ (5b-3c)z+...
\end{equation*}
 and
\begin{equation*}
    k(z)= \frac{g(z)}{zp(z)}= 1+ (3c-q)z+...
\end{equation*}
  It is easy to see  that $h \in \mathcal{P}_{(5b-3c)/2}$, $k \in \mathcal{P}_{(3c-q)}$, $p \in P_{q/2}$ and $f(z)= z p(z) h(z) k(z)$. Then a calculation shows that
\begin{equation}\label{eq1.01}
     \frac{zf'(z)}{f(z)}  = 1+ \frac{zh'(z)}{h(z)} + \frac{zk'(z)}{k(z)}  + \frac{zp'(z)}{p(z)}
\end{equation}
so that
\begin{equation}\label{eq1.0}
    \left| \frac{zf'(z)}{f(z)}-1 \right| \le \left| \frac{zh'(z)}{h(z)} \right| +\left| \frac{zk'(z)}{k(z)} \right| +\left| \frac{zp'(z)}{p(z)} \right|.
\end{equation}
 Let $m=|5b-3c| \le 2$, $n=|3c-q| \le 1$ and $\alpha=0$. Using Lemma \ref{lm1} for the functions $h, p$ and $k$, we have
\begin{equation}\label{eq1.1}
 \left| \frac{zh'(z)}{h(z)}\right|   \le \frac{r}{(1-r^2)} \frac{(m r^2+4r+m)}{(r^2+m r+1)}, \,\,
\left| \frac{zk'(z)}{k(z)}\right|   \le \frac{r}{(1-r^2)} \frac{(n r^2+2r+n)}{(n r+1)},
\end{equation}
and
\begin{equation}\label{eq1.2}
    \left| \frac{zp'(z)}{p(z)}\right|   \le \frac{r}{(1-r^2)} \frac{(qr^2+4r+q)}{(r^2+qr+1)}.
\end{equation}
Inequality \eqref{eq1.0} together with \eqref{eq1.1} and \eqref{eq1.2},  gives
\begin{equation}\label{eq2.3}
 \left| \frac{zf'(z)}{f(z)}-1 \right| \le \frac{r}{(1-r^2)}\left( \frac{mr^2+4r+m}{r^2+mr+1}+\frac{nr^2+2r+n}{nr+1}+\frac{q r^2+4r+q}{r^2+q r+1}\right).
\end{equation}
Define the functions $f_2$, $g_2$ $:\Delta \to \mathbb{C}$ by
\begin{equation}\label{eq1.3}
f_2(z)= \frac{z(1-qz+z^2)(1-(3c-q)z)(1-(5b-3c)z+z^2)}{(1-z^2)^3}
\end{equation}
and
\begin{equation}\label{eq1.4}
g_2(z)= \frac{z(1-qz+z^2)(1-(3c-q)z)}{(1-z^2)^2},  \text{where}\,\, |5b-3c| \le 2\,\,\text{and}\,\, |3c-q| \le 1.
\end{equation}

It follows from \eqref{eq0.7}, \eqref{eq1.3} and \eqref{eq1.4}  that
\begin{equation*}
\frac{f_2(z)}{g_2(z)}=\frac{(1-(5b-3c)z+z^2)}{(1-z^2)}=\frac{1+w_4(z)}{1-w_4(z)}
\end{equation*}
and
\begin{equation*}
\frac{g_2(z)}{z p_1(z)}=\frac{(1-(3c-q)z)}{(1-z^2)}=\frac{1+w_5(z)}{1-w_5(z)}
\end{equation*}
where
\begin{equation*}
w_4(z)=\frac{z(z-(5b-3c)/2)}{(1-z(5b-3c)/2)}\,\, \text{and}\,\, w_5(z)= \frac{z(z-(3c-q))}{(2-(3c-q)z-z^2)}
\end{equation*}
which are Schwarz functions in the unit disc $\Delta$ and hence $\RE(f_2/g_2)>0$, $\RE(g_2/(z p_1))>1/2$ and $\RE(p_1)>0$ (as shown  for class $H^1_{b,c}$). Thus, $f_2/g_2 \in \mathcal{P}_{(5b-3c)/2}$, $g_2/(z p_1) \in \mathcal{P}_{(3c-q)}$ and $p_1 \in \mathcal{P}_{(q/2)}$. Thus the function $f_2 \in H^2_{b,c}$ and the class $H^2_{b,c}$ is non-empty.
$f_2(z)$ is an extreme functions for the class $H^2_{b,c}$ provided $q \le2$, $c \ge q/3$ and $b \ge 3c/5.$
\subsection{\textbf{The class $H^3_b$}}
\noindent
Let the functions $f$ and $g$ whose Taylor series expansions are given by $f(z)=z+ f_1 z^2+..$
be such that $\RE\{f/(zp)\}>0$ where $p \in \mathcal{P}$ is represented by $ p(z)=1+ q z+... $
Now consider, $f/(zp) = 1+(f_1-q)z+...$
where $|f_1-q| \le 2$ and $|q| \le 2$ which gives $|f_1| \le 4.$
Therefore, we consider the function $f$ with fixed second coefficient whose Taylor series expansion is given by $ f(z)=z+4b z^2+...$
where $b \in [0,1]$ such that $f \in \A_{4b}$. If the function  $f \in$ $H^3_b$ then there exists $p \in \mathcal{P}$ such that $f/(zp) \in \mathcal{P}$. Define
\begin{equation*}
    h(z)= \frac{f}{zp}(z)= 1+ (4b-q)z+...
\end{equation*}
     so that $h \in \mathcal{P}_{(4b-q)/2}$, $p \in \mathcal{P}_{q/2}$ and $f$ can be expressed as $f(z)= z p(z) h(z)$. Then a calculation shows that
\begin{equation}\label{eq2.1}
    \left| \frac{zf'(z)}{f(z)}-1 \right| \le \left| \frac{zh'(z)}{h(z)} \right| +\left| \frac{zp'(z)}{p(z)} \right|.
\end{equation}
From lemma \ref{lm1}, for $l=|4b-q| \le 2, |q|\leq2$ and $\alpha=0$, we get
\begin{equation}\label{eq2.2}
 \left| \frac{zh'(z)}{h(z)}\right|   \le \frac{r}{(1-r^2)} \frac{(l r^2+4r+l)}{(r^2+l r+1)}
\quad\text{and}\quad
    \left| \frac{zp'(z)}{p(z)}\right|   \le \frac{r}{(1-r^2)} \frac{(qr^2+4r+q)}{(r^2+qr+1)}.
\end{equation}
Using  \eqref{eq2.1} and \eqref{eq2.2}, it is easy to show that
\begin{equation}\label{eq02.2}
       \left| \frac{zf'(z)}{f(z)}-1 \right| \le \frac{
       r[(l r^2+4r+l)(qr^2+4r+q)+(qr^2+4r+q)(r^2+l r+1)]}
       {(1-r^2)(r^2+l r+1)(r^2+qr+1)}.
\end{equation}
Define the functions $f_3:\Delta \to \mathbb{C}$ by
\begin{equation}\label{eq0.10}
f_3(z)= \frac{z(1-qz+z^2)(1-(4b-q)z+z^2)}{(1-z^2)^2}, \  |4b-q| \le 2.
\end{equation}
Then, \eqref{eq2.2} together with \eqref{eq0.7} gives
\begin{equation*}
\frac{f_3(z)}{z p_1(z)}=\frac{(1-(4b-q)z+z^2)}{(1-z^2)}=\frac{1+w_6(z)}{1-w_6(z)},\,\,
\text{where}\,\,
w_6(z)=\frac{z(z-(4b-q)/2)}{(1-z(4b-q)/2)}
\end{equation*}
which is analytic function satisfying the conditions of Schwarz lemma in the unit disc $\Delta$ and hence $\RE(f_3/(z p_1))>0$ and $\RE(p_1)>0$ (shown above in class $H^1_{b,c}$). Thus, $f_3/(z p_1) \in \mathcal{P}_{(6b-q)/2}$ and $p_1 \in \mathcal{P}_{(q/2)}$. Thus, the function $f_3 \in H^3_{b}$ and the class $H^3_{b}$ is non-empty.
\begin{equation}\label{eq2.5}
F_3(z)= \frac{z(1-z^2)^2}{(1-qz+z^2)(1-(4b-q)z+z^2)},\quad |4b-q|\leq2
\end{equation}
and $f_3(z)$ are extreme functions for the class $H^3_{b}$ provided $q \le 2$ and $b \ge q/4.$
\\
Using the information in the above subsections, we investigate several radius problems associated with functions in the classes $H^1_{b,c}$, $H^2_{b,c}$ and $H^3_b$ in the next section.
\section{Radius of Starlikeness}
\noindent
In the present  section, we determine the radii of the classes $H^1_{b,c}$, $H^2_{b,c}$ and $H^3_b$ to belong to various Ma-Minda starlike classes of functions. Throughout this section we assume that $d=|6b-4c|\leq2$, $s=|4c-q|\leq2$, $m=|5b-3c|\leq2$, $n=|3c-q|\leq1$, $l=|4b-q|\leq2$.\\
\noindent
For $0 \le \alpha \le 1$, the class $\mathcal{S}^*(\alpha)=\mathcal{S}^*[1-2\alpha,-1]$=$\{f \in \A: \RE(zf'(z)/f(z))>\alpha\}$ is the class of starlike functions of order $\alpha$.
\begin{theorem}\label{th1}
The sharp $\mathcal{S}^*(\alpha)$ radii for the classes $H^1_{b,c}$,  $H^2_{b,c}$ and $H^3_{b}$ are as follows:
\begin{enumerate}[(i)]
    \item For the class $H^1_{b,c}$, the sharp $\mathcal{S}^*(\alpha)$ radius $\rho_1\in(0, 1)$ is the smallest  root  of the equation $x_1(r)=0$, where\\
    $x_1(r)=\alpha-1+\alpha (d+s+q) r+(10+2 \alpha+d s+\alpha d s+(1+\alpha) (d+s) q) r^2+((10+\alpha) (s+q)+d (10+\alpha+(2+\alpha) s q)) r^3+8 (3+s q+d (s+q)) r^4+(-(-12+\alpha) (s+q)-d (-12+\alpha+(-4+\alpha) s q)) r^5+(14+3 s q+3 d (s+q)-\alpha (2+s q+d (s+q))) r^6-(-2+\alpha) (d+s+q) r^7+(1-\alpha) r^8$.
    \item For the class $H^2_{b,c}$, the sharp  $\mathcal{S}^*(\alpha)$ radius $\rho_2\in(0, 1)$ is the smallest  root  of the equation $x_2(r)=0$, where\\
    \[x_2(r)=-\frac{4}{1-r^2}-\frac{1}{1+n r}+\frac{2+2 n r}{1+2 n r+r^2}+\frac{2+m r}{1+r (m+r)}+\frac{2+q r}{1+r (q+r)}-\alpha.\]
    \item  For the class $H^3_{b}$, the sharp $\mathcal{S}^*(\alpha)$ radius $\rho_3\in(0, 1)$ is the smallest  root  of the equation $x_3(r)=0$, where\\
    $x_3(r)=\alpha-1+\alpha (l+q) r+(7+\alpha+(1+\alpha) l q) r^2+6 (l+q) r^3+(9+3 l q-\alpha (1+l q)) r^4-(-2+\alpha) (l+q) r^5+(1-\alpha) r^6$.
\end{enumerate}
\end{theorem}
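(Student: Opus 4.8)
The plan is to obtain, for each of the three classes, a disc of the form $|zf'(z)/f(z) - 1| \le \delta(r)$ (or a half-plane bound on the real part) valid for $|z| = r$, and then to impose the condition that this disc lies inside the region $\RE(w) > \alpha$, i.e. that $1 - \delta(r) \ge \alpha$; the largest $r$ for which this holds is the claimed radius, and sharpness is checked against the extremal functions $F_1$, $f_2$, $F_3$ exhibited in Section~2. Concretely, for $H^1_{b,c}$ I would start from the estimate \eqref{eq0.5}, which already bounds $|zf'(z)/f(z)-1|$ by an explicit rational function of $r$ (with parameters $d,s,q$); the inequality $1 - (\text{RHS of }\eqref{eq0.5}) \ge \alpha$ is equivalent, after clearing the positive denominator $(1-r^2)(r^2+dr+1)(r^2+sr+1)(r^2+qr+1)$, to a polynomial inequality $x_1(r) \le 0$ — and expanding that product is exactly how one arrives at the degree-$8$ polynomial $x_1(r)$ written in the statement. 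So step one is: verify that $x_1(r)$ is precisely $(1-\alpha)(1-r^2)\prod(r^2+\cdot r+1) - r[\dots]$ with the bracket as in \eqref{eq0.5}. Then one observes $x_1(0) = \alpha - 1 < 0$ and $x_1(1) > 0$ (the $r=1$ value collapses to a manifestly positive quantity since the negative terms carry the vanishing factor $(1-r^2)$), so by the intermediate value theorem a root $\rho_1 \in (0,1)$ exists; taking the smallest such root gives the radius, since on $(0,\rho_1)$ we have $x_1 < 0$, hence $\RE(zf'/f) \ge 1 - \delta(r) > \alpha$.

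For $H^3_b$ the argument is identical in structure but one degree lower: from \eqref{eq02.2} one clears the denominator $(1-r^2)(r^2+lr+1)(r^2+qr+1)$ against $1 - \alpha$, and the resulting degree-$6$ polynomial is $x_3(r)$; again $x_3(0) = \alpha-1<0$, $x_3(1)>0$, and the smallest root is the radius. For $H^2_{b,c}$ the situation is slightly different because the bound I would use comes from \eqref{eq2.3}, whose three summands do not share a common denominator with a single clean polynomial form; accordingly the radius condition is stated not as a polynomial equation but directly as $x_2(r) = 0$ where $x_2(r)$ is $1 - \alpha$ minus the sum of the three rational pieces (rewritten so that the denominator of $zp'/p$ etc.\ appears in the form $1 + r(q+r)$), i.e. one just asks for the largest $r$ with $\sum(\text{pieces}) = 1 - \alpha$. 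Here the key check is that the function $r \mapsto x_2(r)$ is continuous and decreasing on $(0,1)$ with $x_2(0^+) = 1-\alpha > 0$ and $x_2(r) \to -\infty$ as $r \to 1^-$ (the $-4/(1-r^2)$ term dominates), so there is a unique root and "smallest root" = "the root".

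The sharpness claim is where the real content lies, and I would handle all three cases by the same device: evaluate $zF'(z)/F(z)$ for the extremal function (say $F_1$ from \eqref{eq0.8}) at the boundary point $z = -r$ (or $z = r$, whichever sign makes every factor contribute with the same sign — the parameters $d,s,q$ being chosen $\ge 0$ under the stated conditions $q \le 2$, $c \ge q/4$, $b \ge 2c/3$), and show that at that point $\RE(zF'/F) = 1 - \delta(r)$ exactly, so that the disc bound \eqref{eq0.5} is attained and no larger radius can work. Computing $zF_1'/F_1$ from the product form $F_1(z)^{-1} = z^{-1}(1-z^2)^{-3}(1-qz+z^2)(1-(4c-q)z+z^2)(1-(6b-4c)z+z^2)$ is just logarithmic differentiation: $zF_1'/F_1 - 1 = -6z^2/(1-z^2) \cdot(\text{wait, }3\cdot\frac{2z^2}{1-z^2}) + \sum \frac{z(2z - \mu_j)}{1 - \mu_j z + z^2}$ with $\mu_j \in \{q, 4c-q, 6b-4c\}$, and at $z = -r$ each term is real with a definite sign; matching this to the earlier bound is the computation that certifies sharpness.

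The main obstacle I anticipate is purely the bookkeeping in the sharpness step: one must check that at the chosen boundary point the three (or two) curvature terms $zp_1'/p_1$, $zh'/h$, $zk'/k$ all attain their Lemma~\ref{lm1} bounds simultaneously and with consistent signs, which is why the side conditions $q\le2$, $c\ge q/4$, $b\ge 2c/3$ (resp.\ the analogues for the other classes) are imposed — they guarantee $d = 6b-4c$, $s = 4c-q$, $q$ are all nonnegative so that the extremal configuration of each factor is the point $z = -r$ for all of them at once. A secondary, milder obstacle is confirming that the cleared-denominator polynomial really reproduces the stated coefficients of $x_1$ and $x_3$ termwise; this is a finite symbolic expansion with no conceptual difficulty, but it is the step most prone to sign errors, and I would cross-check it by specializing $b = c = q = 0$ (where $x_1$ must reduce to the Lecko–Ravichandran polynomial from \cite{lecko}) and by checking the $\alpha = 0$ case against \eqref{eq0.5} directly.
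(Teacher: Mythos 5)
For parts (i) and (iii) your plan is exactly the paper's argument: clear the denominator in \eqref{eq0.5} (resp.\ \eqref{eq02.2}) against $1-\alpha$, apply the intermediate value theorem using $x_1(0)=\alpha-1<0$, $x_1(1)>0$ (resp.\ $x_3$), take the smallest root, and certify sharpness by logarithmic differentiation of $F_1$ (resp.\ $F_3$) at $z=-\rho$ under the nonnegativity conditions on $6b-4c$, $4c-q$, $4b-q$. (Small slip: the consistency check against \cite{lecko} is at $b=c=1$, $q=2$, not $b=c=q=0$.)

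Part (ii), however, has a genuine gap. The expression $x_2$ in the statement is \emph{not} a rewriting of the modulus bound \eqref{eq2.3}: the term coming from the factor $k=g/(zp)\in\mathcal{P}_n(1/2)$ in the stated $x_2$ is
\begin{equation*}
-\frac{1}{1+nr}+\frac{2+2nr}{1+2nr+r^2}\;=\;1-\frac{r\left(n+2r+nr^2\right)}{(1+nr)\left(1+2nr+r^2\right)},
\end{equation*}
whose denominator is $(1+nr)(1+2nr+r^2)$, whereas the corresponding piece of \eqref{eq2.3} is $\dfrac{r}{1-r^2}\cdot\dfrac{nr^2+2r+n}{nr+1}$; these are genuinely different functions, and the \eqref{eq2.3}-based inequality would produce a different (smaller) radius, namely essentially the one already in \cite{lecko}. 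The paper instead starts from the exact identity \eqref{eq1.01} and estimates $\RE(zk'(z)/k(z))$ from below by McCarty's Lemma \ref{lm2} (with $\alpha=1/2$, $b=n$), which is strictly sharper than $-|zk'/k|$ bounded via Lemma \ref{lm1}; this is precisely the source of the improved radius advertised in the remark after Theorem \ref{th3}. Using Lemma \ref{lm2} requires verifying the case condition $R_\alpha\le R_b$, which is inequality \eqref{eqn2}, valid only for $0<r<1/3$ — hence the paper works on $(0,1/3)$ and checks $x_2(1/3)<0$ rather than evaluating at $r=1$; your claim that $x_2$ decreases to $-\infty$ as $r\to1^-$ is beside the point for this argument. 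The sharpness step also cannot be run as you propose: $f_2$ at $z=-\rho_2$ attains equality only in the weaker bound \eqref{eq2.3}, not in the bound defining the stated $x_2$; the paper has to construct different extremal functions $F_2$, $G_2$ (with $3c-q=1$, i.e.\ $c=(1+q)/3$, so that $G_2/(zp_1)=1/(1+(3c-q)z)$ is extremal for the half-plane $\RE>1/2$ condition) and evaluate at $z=+\rho_2$. Without these two ingredients — Lemma \ref{lm2} on $(0,1/3)$ and the $F_2$, $G_2$ construction — your proof establishes a correct but different statement in case (ii), not the theorem as written.
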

\begin{proof} Radius estimate for respective classes is as follows:
\begin{enumerate}[(i)]
    \item
       Note that $x_1(0)=(\alpha-1) <0$ and $x_1(1)=6 (2+d) (2+e) (2+q)>0$ and thus in view of the intermediate value theorem, there exists a root  of the  equation $x_1(r)=0$  in the interval $(0,1)$. Let  $r=\rho_1\in(0, 1)$ be the smallest root of the equation $x_1(r)=0$.
      For $f\in H^1_{b,c}$, using  \eqref{eq0.5}, we have
      \begin{equation}\label{eq3.1}
      \left| \frac{zf'(z)}{f(z)}-1 \right|\leq\frac{r}{(1-r^2)}\left( \frac{dr^2+4r+d}{r^2+dr+1}+\frac{sr^2+4r+s}{r^2+sr+1}+\frac{q r^2+4r+q}{r^2+q r+1}\right)
      \end{equation}
      which yields
      \begin{align}\label{eq0.9}
      \RE\left(\frac{zf'(z)}{f(z)}\right)& \geq1-\frac{r}{(1-r^2)}\left( \frac{dr^2+4r+d}{r^2+dr+1}+\frac{sr^2+4r+s}{r^2+sr+1}+\frac{q r^2+4r+q}{r^2+q r+1}\right)\\
      &\geq\alpha\nonumber
      \end{align}
      whenever $x_1(r)\le0$.
      This shows that  $\RE(zf'(z)/f(z))\geq\alpha$ for $|z|=r\leq\rho_1$.

  For $u=6b-4c\geq0$, $v=4c-q\geq0$, using \eqref{eq0.9},   the function $F_1(z)$ defined for the class $H^1_{b,c}$ in \eqref{eq0.8}  at $z=-\rho_1$ satisfies the equality
     \begin{align*}
    \quad\quad\RE\left(\frac{zF_1'(z)}{F_1(z)}\right)&=1-\frac{\rho_1}{(1-\rho_1^2)}\left( \frac{u\rho_1^2+4\rho_1+u}{\rho_1^2+u\rho_1+1}+\frac{v\rho_1^2+4\rho_1+v}{\rho_1^2+v\rho_1+1}+\frac{q \rho_1^2+4\rho_1+q}{\rho_1^2+q \rho_1+1}\right)\\
    &=\alpha.
     \end{align*}
Thus, the radius is sharp.


 \item  A calculation shows that $x_2(0)=1-\alpha>0$ and $x_2(1/3)=-(-60 (-5+(-12+n) n)+6 (135+n (148+21 n)) q+3 m (9 (30+17 q)+n (296+42 n+156 q+27 n q))+2 (10+3 m) (3+n) (5+3 n) (10+3 q) \alpha )/(2 (10+3 m) (3+n) (5+3 n) (10+3 q))<0$. By  intermediate value theorem, there exists a root $r\in(0, 1/3)$ of the equation $x_2(r) = 0$. Let $\rho_2\in(0, 1/3)$  be the smallest root of the equation $x_2(r)=0$ and $f\in H^2_{b,c}$.
        An easy calculation shows that  for $0<r<1/3$,
      \begin{equation}\label{eqn2}
      C_b-D_b-\sqrt{C_1/2}=\frac{-1+4 r^2+2 b^2 r^2+8 b r^3+r^4+2 b^2 r^4}{2 (-1+r) (1+r) \left(1+2 b r+r^2\right)^2}>0,
      \end{equation}
      where  $C_b$ and $D_b$ are given by  \eqref{eqn1}.
      From \eqref{eq1.01} and \eqref{eq1.1}, and using Lemma \ref{lm2} together with \eqref{eqn2}, we have
     \begin{align}\label{eq2.4}
     &\RE\left(\frac{zf'(z)}{f(z)}\right)\\
     &\geq 1-\left(\frac{r}{1-r^2}\right)\left(\frac{m r^2+4r+m}{r^2+m r+1}+\frac{q r^2+4r+q}{r^2+q r+1}\right)-\frac{r \left(n+2 r+n r^2\right)}{(1+n r) \left(1+2 n r+r^2\right)}\geq\alpha\nonumber
     \end{align}
      whenever $x_2(r)\leq0$. Thus, $\RE(zf'(z)/f(z))\geq\alpha$ for $|z|=r\leq\rho_2$.
     To prove the sharpness, consider the function $F_2, G_2:\Delta\rightarrow \mathbb{C}$ defined by
      \[F_2(z)=\frac{z (1+3 c z-q z) (1-z^2)^2}{(1+5 b z-3 c z+z^2) (1+6 c z-2 q z+z^2) (1+q z+z^2)}\]
      and
      \[G_2(z)=z\frac{1-z^2}{(1+q z+z^2)(1+3c z-q z)},\]
       where $|5b-3c|\leq2$ and $|3c-q|\leq1.$
       Note that for $c=(1+q)/3$,
       \[\frac{F_2(z)}{G_2(z)}=\frac{ (1-z^2)}{1-(1-5 b+q) z+z^2}=\frac{1+w_1(z)}{1-w_1(z)}\]
       and
       \[\frac{G_2(z)}{zp_1(z)}=\frac{1}{1+3 c z-q z}=\frac{1+w_2(z)}{1-w_2(z)},\]
where \[w_1(z)= \frac{(1-5 b+q-2 z) z}{2+(-1+5 b-q) z}\quad\text{and} \quad w_2(z)=\frac{(q-3 c) z}{2+(3 c -q) z}\] which are Schwarz functions and hence $\RE(F_2/G_2)>0$,  $\RE(G_2/(zp_1))>0$ and $\RE(p_1)>0$.
      For $5b-3c\geq0$, $3c-q=1$ and $z=\rho_2$, it follows from \eqref{eq2.4} that
      \begin{align*}
      &\RE\left(\frac{zF_2'(z)}{F_2(z)}\right)\\ &= 1-\left(\frac{\rho_2}{1-\rho_2^2}\right)\left(\frac{m \rho_2^2+4\rho_2+m}{\rho_2^2+m \rho_2+1}+\frac{q \rho_2^2+4\rho_2+q}{\rho_2^2+q \rho_2+1}\right)-\frac{\rho_2 \left(n+2 \rho_2+n \rho_2^2\right)}{(1+n \rho_2) \left(1+2 n \rho_2+\rho_2^2\right)}\\
      &= \alpha.\nonumber
      \end{align*}

    \item   It is easy to see that  $x_3(0)=\alpha-1<0$ and $x_3(1)=4 (2+l) (2+q)>0$. By intermediate value
theorem, there exists a root $r\in(0, 1)$ of the equation $x_3(r) = 0$. Let  $\rho_3\in(0, 1)$ be the smallest root of the equation $x_3(r)=0$. From  \eqref{eq02.2} it follows that for any  $f\in H^3_{b}$,
\begin{align}\label{eq2.6}
\RE\left(\frac{zf'(z)}{f(z)}\right)& \geq1-\frac{r}{1-r^2} \left(\frac{l r^2+4 r+l}{r^2+l r+1}+\frac{q r^2+4 r+q}{r^2+q r +1}\right)\geq\alpha
\end{align}
 whenever $x_3(r)\leq0$. This proves that  $\RE(zf'(z)/f(z))\geq\alpha$  for $|z|=r\leq\rho_3$.
 The result is sharp for the function $F_3$ defined for the class $H^3_{b}$ in \eqref{eq2.5}.  At $z=-\rho_3$, and for $u=4b-q\geq0$,
 it follows from \eqref{eq2.6} that
\begin{align*}
\RE\left(\frac{zF_3'(z)}{F_3(z)}\right)& =1-\frac{\rho_3}{1-\rho_3^2} \left(\frac{u \rho_3^2+4 \rho_3+u}{\rho_3^2+u \rho_3+1}+\frac{q \rho_3^2+4 \rho_3+q}{\rho_3^2+q \rho_3 +1}\right)=\alpha.
\end{align*}
\end{enumerate}
\noindent
\end{proof}
\noindent

\begin{remark}
For $b=1$ and $c=1$ and $q=2$,  Theorem \ref{th1}  yields the result \cite[Theorem 1, p. 6]{lecko}.
\end{remark}

The class $\mathcal{S}_L^*$=$\mathcal{S}^*(\sqrt{1+z})$ and it represents the collection of functions in the class $\A$ whose $zf'(z)/f(z)$ lies in the region bounded by the lemniscate of Bernoulli $|w^2-1|=1.$ Various studies on $\mathcal{S}_L^*$ can be seen in \cite{sokol, mad}. In the following result  we obtain the sharp   radii results for the classes $H^1_{b,c}$, $H^2_{b,c}$ and $H^3_b$.

\begin{theorem}\label{th2}
The sharp $\mathcal{S}_L^*$ radii for the classes $H^1_{b,c}$,  $H^2_{b,c}$ and $H^3_{b}$ are as follows:
\begin{enumerate}[(i)]
    \item For the class $H^1_{b,c}$, the sharp $\mathcal{S}_L^*$ radius $\rho_1\in(0, 1)$ is the smallest  root  of the equation $x_1(r)=0$, where\\
    $x_1(r)=(1-\sqrt{2})-(-2+\sqrt{2}) (d+s+q) r+(-2 (-7+\sqrt{2})-(-3+\sqrt{2}) (s q+d (s+q))) r^2+(-(-12+\sqrt{2}) (s+q)-d (-12+\sqrt{2}+(-4+\sqrt{2}) s q)) r^3+8 (3+s q+d (s+q)) r^4+((10+\sqrt{2}) (s+q)+d (10+\sqrt{2}+(2+\sqrt{2}) s q)) r^5+(2 (5+\sqrt{2})+(1+\sqrt{2}) (s q+d (s+q))) r^6+\sqrt{2} (d+s+q) r^7+(-1+\sqrt{2}) r^8$.
    \item For the class $H^2_{b,c}$, the sharp $\mathcal{S}_L^*$ radius $\rho_2\in(0, 1)$ is the smallest  root  of the equation $x_2(r)=0$, where\\
    $ x_2(r)=(1-\sqrt{2})-(-2+\sqrt{2}) (m+n+q) r+(11-\sqrt{2}-(-3+\sqrt{2}) (n q+m (n+q))) r^2+(8 (m+q)-n (-12+\sqrt{2}+(-4+\sqrt{2}) m q)) r^3+(11+\sqrt{2}+(3+\sqrt{2}) m q+8 n (m+q)) r^4+((10+\sqrt{2}) n+(2+\sqrt{2}) q+(2+\sqrt{2}) m (1+n q)) r^5+(1+\sqrt{2}) (1+n (m+q)) r^6+\sqrt{2} n r^7$.
    \item For the class $H^3_{b}$, the sharp $\mathcal{S}_L^*$ radius $\rho_3\in(0, 1)$ is the smallest  root  of the equation $x_3(r)=0$, where\\
    $x_3(r)=(1-\sqrt{2})-(-2+\sqrt{2}) (l+q) r+(9-\sqrt{2}-(-3+\sqrt{2}) l q) r^2+6 (l+q) r^3+(7+\sqrt{2}+(1+\sqrt{2}) l q) r^4+\sqrt{2} (l+q) r^5+(-1+\sqrt{2}) r^6$.
\end{enumerate}
\end{theorem}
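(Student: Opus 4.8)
The plan is to run, almost verbatim, the three–step argument used for Theorem~\ref{th1}, replacing the half–plane $\{\RE w>\alpha\}$ by the region bounded by the right loop of the lemniscate of Bernoulli, namely $\Omega_L:=\sqrt{1+z}(\Delta)=\{w\in\mathbb{C}:|w^2-1|<1,\ \RE w>0\}$. The geometric input is the standard fact that the largest disc centred at $1$ contained in $\overline{\Omega_L}$ is $\{w:|w-1|\le\sqrt{2}-1\}$, and that the circle $|w-1|=\sqrt{2}-1$ meets $\partial\Omega_L$ only at $w=\sqrt{2}$ (since $(\sqrt{2})^2-1=1$); this is obtained by minimising $|w-1|$ over $|w^2-1|=1$, the minimum $\sqrt{2}-1$ being attained at $w=\sqrt{2}$. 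Hence, for $f\in\A$, if $|zf'(z)/f(z)-1|\le\sqrt{2}-1$ on $|z|=r$ then $zf'/f$ maps $\{|z|<r\}$ into $\Omega_L$, and since $\Omega_L$ is simply connected with $(zf'/f)(0)=1=\sqrt{1+0}$, this means $zf'/f\prec\sqrt{1+z}$ there, i.e.\ $r^{-1}f(rz)\in\mathcal{S}_L^*$. So for each class it suffices to determine the largest $r$ for which the disc estimate already established in Section~2 forces $|zf'/f-1|\le\sqrt{2}-1$.

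For $H^1_{b,c}$ I would use \eqref{eq0.5}: for $|z|=r$,
\[
\Bigl|\frac{zf'(z)}{f(z)}-1\Bigr|\le\delta_1(r):=\frac{r}{1-r^2}\Bigl(\frac{dr^2+4r+d}{r^2+dr+1}+\frac{sr^2+4r+s}{r^2+sr+1}+\frac{qr^2+4r+q}{r^2+qr+1}\Bigr).
\]
On $(0,1)$ the quantities $1-r^2$, $r^2+dr+1$, $r^2+sr+1$, $r^2+qr+1$ are all strictly positive (the three quadratics have discriminant $\le0$ because $d,s,q\in[0,2]$), so $\delta_1(r)\le\sqrt{2}-1$ is equivalent to $x_1(r)\le0$, where $x_1(r)=(1-r^2)(r^2+dr+1)(r^2+sr+1)(r^2+qr+1)\bigl(\delta_1(r)-(\sqrt{2}-1)\bigr)$ is exactly the polynomial displayed in the statement; carrying out this clearing of denominators and matching it coefficient by coefficient against the stated $x_1$ is the bulk of the work. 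Since $x_1(0)=1-\sqrt{2}<0$ and $x_1(1)=6(2+d)(2+s)(2+q)>0$, the intermediate value theorem yields a zero of $x_1$ in $(0,1)$; taking $\rho_1$ to be the least one, $|zf'/f-1|\le\delta_1(r)\le\sqrt{2}-1$ for $|z|=r\le\rho_1$, so $\rho_1$ is an $\mathcal{S}_L^*$ radius. The classes $H^2_{b,c}$ and $H^3_b$ are treated the same way, starting from \eqref{eq2.3} and \eqref{eq02.2} respectively; for $H^2_{b,c}$ only Lemma~\ref{lm1} is used (not the sharper Lemma~\ref{lm2}), which is why $\rho_2$ need not be confined to $(0,1/3)$ as in Theorem~\ref{th1}. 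Clearing the positive denominators reproduces $x_2$ and $x_3$, and one records $x_2(0)=x_3(0)=1-\sqrt{2}<0$, $x_2(1)=6(2+m)(n+1)(2+q)>0$, $x_3(1)=4(2+l)(2+q)>0$.

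For sharpness I would use the extremal functions $f_1,f_2,f_3$ of Section~2. Take $f_1\in H^1_{b,c}$ in the range where it lies in the class ($q\le2$, $c\ge q/4$, $b\ge2c/3$, so that $u:=6b-4c\ge0$, $v:=4c-q\ge0$, $q\ge0$). Writing $f_1=z\,p_1h_1k_1$ as a product of $z$ with three factors of the form $\phi(z)=(1-az+z^2)/(1-z^2)$, $a\in\{q,v,u\}\subseteq[0,2]$, one has $\bigl(z\phi'/\phi\bigr)(-r)=\frac{r}{1-r^2}\cdot\frac{ar^2+4r+a}{r^2+ar+1}\ge0$ — i.e.\ each such factor attains, with positive real value, the Lemma~\ref{lm1} modulus bound at $z=-r$ (this computation is already implicit in Section~2). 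Summing, $\bigl(zf_1'/f_1\bigr)(-\rho_1)=1+\delta_1(\rho_1)=1+(\sqrt{2}-1)=\sqrt{2}\in\partial\Omega_L$, so for $r>\rho_1$ the point $\bigl(zf_1'/f_1\bigr)(-r)=1+\delta_1(r)$ lies outside $\Omega_L$ and $\rho_1$ cannot be enlarged. The identical computation with $f_2$ — whose additional degree–one factor $(1-nz)/(1-z^2)$ has logarithmic derivative times $z$ equal to $\frac{r}{1-r^2}\cdot\frac{nr^2+2r+n}{nr+1}$ at $z=-r$ — and with $f_3$ establishes the sharpness in (ii) and (iii).

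The main obstacle is entirely computational: verifying that clearing denominators in the inequality $\delta_i(r)\le\sqrt{2}-1$ yields precisely the three stated polynomials $x_i(r)$. The remaining ingredients — the inscribed–disc lemma for $\Omega_L$, the positivity of all denominators on $(0,1)$, the sign evaluations of $x_i$ at $0$ and $1$, and the fact that each building–block factor attains its Lemma~\ref{lm1} bound with the correct sign at $z=-r$ — are routine.
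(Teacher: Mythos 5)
Your proposal is correct and follows essentially the same route as the paper: the inscribed-disc fact $\{w:|w-1|<\sqrt{2}-1\}\subset\{w:|w^2-1|<1\}$ (which the paper cites from Ali \emph{et al.} with $C=1$ and you re-derive directly), the reduction of the bounds \eqref{eq0.5}, \eqref{eq2.3}, \eqref{eq02.2} to the polynomial inequalities $x_i(r)\le 0$ by clearing the positive denominators, the intermediate value theorem at $r=0,1$, and sharpness via the extremal functions $f_1,f_2,f_3$ evaluated at $z=-\rho_i$, where $zf_i'/f_i=1+\delta_i(\rho_i)=\sqrt{2}$ lands on the lemniscate boundary. The only cosmetic difference is that you make explicit the sign convention $u,v\ge 0$ and the boundary value $\sqrt 2$ (the paper's displayed sharpness computation carries a stray minus sign but asserts the same value $1$ for $|(zf_1'/f_1)^2-1|$), so no substantive divergence.
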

\begin{proof}
\begin{enumerate}[(i)]
\item Note that $x_1(0)=(1-\sqrt{2}) <0$ and $x_1(1)=6 (2 + d) (2 + q) (2 + s)>0$ and thus in view of the intermediate value theorem, there exists a root  of the  equation $x_1(r)=0$  in the interval $(0,1)$. Let  $r=\rho_1\in(0, 1)$ be the smallest root of the equation $x_1(r)=0$.
     Ali \emph{et al.} \cite[Lemma 2.2]{ali} had proved, for $2\sqrt{2}/3<C<\sqrt{2},$ that
     \begin{equation}\label{eq3.2}
     \left\{w \in \C: |w-C|< \sqrt{2}-C \right\} \subset \left\{w \in \C: |w^2-1|< 1 \right\}.
     \end{equation}
     In view of  \eqref{eq3.2} and the fact that the centre of the disc in \eqref{eq3.1} is 1,   $f\in \mathcal{S}^*_L$ if
       \begin{equation}\label{eq3.3}
       \frac{r}{(1-r^2)}\left( \frac{dr^2+4r+d}{r^2+dr+1}+\frac{sr^2+4r+s}{r^2+sr+1}+\frac{q r^2+4r+q}{r^2+q r+1}\right)\leq\sqrt{2}-1
       \end{equation}
       which is equivalent to $f\in \mathcal{S}^*_L$ if  $x_1(r)\leq0$. Since $x_1(0)<0$ and $\rho_1$ is the smallest root of the equation $x_1(r)=0$,  $x_1(r)$ is an increasing function on $(0, \rho_1)$. In view of this  $f\in \mathcal{S}^*_L$ for $|z|=r\leq\rho_1$.
       For $u=6b-4c\geq0$, $v=4c-q\geq0$, using \eqref{eq3.3},   the function $f_1(z)$ defined for the class $H^1_{b,c}$ in \eqref{eq0.6}  at $z=-\rho_1$, satisfies the  following equality
     \begin{align*}
    &\left|\left(\frac{zf_1'(z)}{f_1(z)}\right)^2-1\right|\\
    &=\left|\left(1-\frac{\rho_1}{(1-\rho_1^2)}\left( \frac{u\rho_1^2+4\rho_1+u}{\rho_1^2+u\rho_1+1}+\frac{v\rho_1^2+4\rho_1+v}{\rho_1^2+v\rho_1+1}+\frac{q \rho_1^2+4\rho_1+q}{\rho_1^2+q \rho_1+1}\right)\right)^2-1\right|=1.
         \end{align*}
 Thus, the radius is sharp.


 \item  A calculation shows that $x_2(0)=1-\sqrt2<0$ and $x_2(1)= 6(2+m)(1+n)(2+q)>0$. By  intermediate value theorem, there exists a root $r\in(0, 1)$ of the equation $x_2(r) = 0$. Let $\rho_2\in(0, 1)$  be the smallest root of the equation $x_2(r)=0$ and $f\in H^2_{b,c}$.
     As  the centre of the disc in \eqref{eq2.3} is 1,  by \eqref{eq3.2}, $f\in \mathcal{S}^*_L$ if
       \begin{equation}\label{eq3.4}
       \frac{r}{(1-r^2)}\left( \frac{mr^2+4r+m}{r^2+mr+1}+\frac{nr^2+2r+n}{nr+1}+\frac{q r^2+4r+q}{r^2+q r+1}\right)\leq\sqrt2-1
       \end{equation}
    which is equivalent to $f\in \mathcal{S}^*_L$ if  $x_2(r)\leq0$. Since $x_2(0)<0$ and $\rho_2$ is the smallest root of the equation $x_2(r)=0$,  $x_2(r)$ is an increasing function on $(0, \rho_2)$. Thus, $f\in \mathcal{S}^*_L$ for $|z|=r\leq\rho_2$. To prove the sharpness, consider the function $f_2$ defined in \eqref{eq1.3}. For $u=5b-3c\geq0$, $v=3c-q\geq0$ and $z=-\rho_2$, it follows from \eqref{eq3.4} that
      \begin{align*}
      &\left|\left(\frac{zf_2'(z)}{f_2(z)}\right)^2-1\right|\\
      & =\left|\left(1-\frac{\rho_2}{(1-\rho_2^2)}\left( \frac{u\rho_2^2+4\rho_2+u}{\rho_2^2+u\rho_2+1}+\frac{v\rho_2^2+2\rho_2+v}{v\rho_2+1}+\frac{q \rho_2^2+4\rho_2+q}{\rho_2^2+q \rho_2+1}\right)\right)^2-1\right|=1
      \end{align*}
\item It is easy to see that $x_3(0)=1-\sqrt2<0$ and $x_3(1)=4 (2+l) (2+q)>0$, by intermediate  value
theorem, there exists a root $r\in(0, 1)$ of the equation $x_3(r) = 0$. Let  $\rho_3\in(0, 1)$ be the smallest root of the equation $x_3(r)=0$. From \eqref{eq2.1} and \eqref{eq2.2} it follows that for any  $f\in H^3_{b}$
\begin{align}\label{eq3.5}
\left|\frac{zf'(z)}{f(z)}-1\right|\leq \frac{r}{1-r^2} \left(\frac{l r^2+4 r+l}{r^2+l r+1}+\frac{q r^2+4 r+q}{r^2+q r +1}\right)
\end{align}

As  the centre of the disc in \eqref{eq3.5} is 1,  by \eqref{eq3.2}, $f\in \mathcal{S}^*_L$ if
       \begin{equation}\label{eq3.67}
       \frac{r}{1-r^2} \left(\frac{l r^2+4 r+l}{r^2+l r+1}+\frac{q r^2+4 r+q}{r^2+q r +1}\right)\leq\sqrt2-1
       \end{equation}
    which is equivalent to $f\in \mathcal{S}^*_L$ if  $x_3(r)\leq0$. Since $x_3(0)<0$ and $\rho_3$ is the smallest root of the equation $x_3(r)=0$,  $x_3(r)$ is an increasing function on $(0, \rho_3)$. This proves that $f\in \mathcal{S}^*_L$ for $|z|=r\leq\rho_3$.

        \noindent
 The result is sharp for the function $f_3$ defined for the class $H^3_{b}$ in \eqref{eq0.10}.  At $z=-\rho_3$, and for $u=4b-q\geq0$,
 it follows from \eqref{eq3.67} that
\begin{align*}
      &\left|\left(\frac{zf_3'(z)}{f_3(z)}\right)^2-1\right| = \left|\left(\frac{\rho_3}{1-\rho_3^2} \left(\frac{u \rho_3^2+4 \rho_3+u}{\rho_3^2+u \rho_3+1}+\frac{q \rho_3^2+4 \rho_3+q}{\rho_3^2+q \rho_3 +1}\right)\right)^2-1\right|=1.
\end{align*}
\end{enumerate}
\noindent
\end{proof}
\begin{remark}
For $b=1$ and $c=1$ and $q=2$,  Theorem \ref{th2}  yields the result \cite[Theorem 2, p. 8]{lecko}.
\end{remark}
\noindent
For
$\phi_{PAR}= 1+\frac{2}{{\pi}^2}\left(\log \left(\dfrac{1+\sqrt{z}}{1-\sqrt{z}}\right)\right)^2$
  the class $\mathcal{S}^*_p:= \mathcal{S}^*(\phi_{PAR})$ is the class of parabolic starlike functions. A function $f \in \mathcal{S}^*_p$ provided $zf'(z)/f(z)$ lies in the parabolic region given by $\{w\in \C:\RE(w) >|w-1|\}$. For further reading refer to \cite{AV,grs,MM1,ravi}.
The following theorem gives the radius of parabolic starlikeness of the three classes $H^1_{b,c}$, $H^2_{b,c}$ and $H^3_b$.
\begin{theorem}\label{th3}
The $\mathcal{S}_p^*$ radii for the classes $H^1_{b,c}$,  $H^2_{b,c}$ and $H^3_{b}$ are as follows:
\begin{enumerate}[(i)]
    \item For the class $H^1_{b,c}$, the sharp $\mathcal{S}_p^*$ radius $\rho_1\in(0, 1)$ is the smallest  root  of the equation $x_1(r)=0$, where\\
    $x_1(r)=-1+(d+s+q) r+(22+3 s q+3 d (s+q)) r^2+(21 (s+q)+d (21+5 s q)) r^3+16 (3+s q+d (s+q)) r^4+(23 (s+q)+d (23+7 s q)) r^5+(26+5 s q+5 d (s+q)) r^6+3 (d+s+q) r^7+r^8.$
    \item For the class $H^2_{b,c}$,   $\mathcal{S}_p^*$ radius $\rho_2\in(0, 1)$ is the smallest  root  of the equation $x_2(r)=0$, where\\
    $x_2(r)=-1+(m+q) r+(18+4 m n+3 m q+4 n q) r^2+(17 m+36 n+17 q+8 m n q) r^3+(40+28 m n+12 m q+28 n q) r^4+(19 m+40 n+19 q+12 m n q) r^5+(22+8 m n+5 m q+8 n q) r^6+(3 m+4 n+3 q) r^7+r^8.$
    \item For the class $H^3_{b}$, the sharp $\mathcal{S}_p^*$ radius $\rho_3\in(0, 1)$ is the smallest  root  of the equation $x_3(r)=0$, where\\
    $x_3(r)=-1+(l+q) r+3 (5+l q) r^2+12 (l+q) r^3+(17+5 l q) r^4+3 (l+q) r^5+r^6.$
\end{enumerate}
\end{theorem}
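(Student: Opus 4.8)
The plan is to follow the template of Theorems~\ref{th1} and~\ref{th2}, with the lemniscate constant $\sqrt2-1$ there replaced by the parabolic constant $1/2$. For $f$ in $H^1_{b,c}$, $H^2_{b,c}$ and $H^3_b$ respectively and $|z|=r<1$, the bounds already established — \eqref{eq0.5} for $H^1_{b,c}$, \eqref{eq2.3} for $H^2_{b,c}$, and \eqref{eq2.1}--\eqref{eq2.2} for $H^3_b$ — give
\[
\left|\frac{zf'(z)}{f(z)}-1\right|\le M_i(r),
\]
where $M_i(r)$ denotes the corresponding right-hand side; thus $zf'(z)/f(z)$ lies in the closed disc of radius $M_i(r)$ centred at $1$. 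The parabolic region $\Omega_{PAR}=\{w\in\C:\RE w>|w-1|\}$ is the interior of the parabola $\RE w=|w-1|$, whose axis is the real line and whose vertex is $w=1/2$; an elementary computation shows that the point of this parabola nearest to $1$ is precisely the vertex, at distance $1/2$. Hence $|w-1|<1/2$ forces $\RE w=1+\RE(w-1)\ge1-|w-1|>|w-1|$, so the open disc of radius $1/2$ about $1$ lies inside $\Omega_{PAR}$. Consequently $r^{-1}f(rz)\in\mathcal{S}^*_p$ whenever $M_i(r)<1/2$, so the radius we want is governed by the equation $M_i(r)=1/2$.

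Next I would convert $M_i(r)\le1/2$ into the stated polynomial condition. Each denominator occurring in $M_i(r)$ — namely $1-r^2$, $r^2+dr+1$, $r^2+sr+1$, $r^2+mr+1$, $nr+1$, $r^2+lr+1$, $r^2+qr+1$ — is strictly positive on $(0,1)$ when $d,s,m,n,l,q\ge0$, so cross-multiplying is an equivalence: $M_i(r)\le1/2$ holds iff $x_i(r)\le0$, where $x_i$ is obtained by clearing denominators and collecting terms. For $H^3_b$, for instance, this is
\[
x_3(r)=2r\bigl[(lr^2+4r+l)(r^2+qr+1)+(qr^2+4r+q)(r^2+lr+1)\bigr]-(1-r^2)(r^2+lr+1)(r^2+qr+1),
\]
which expands to the displayed sextic; the higher-degree $x_1$ and $x_2$ arise in the same way from $M_1$ and $M_2$. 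One then checks $x_i(0)=-1<0$ and $x_i(1)>0$ — e.g.\ $x_3(1)=8(2+l)(2+q)$, and $x_1(1)$, $x_2(1)$ are likewise positive (products of factors of the form $2+d$, $2+s$, $2+m$, $1+n$, $2+q$) — so by the intermediate value theorem $x_i$ has a root in $(0,1)$; let $\rho_i$ be the smallest one. Since $x_i$ is continuous with $x_i(0)<0$, it stays negative on $[0,\rho_i)$, so $M_i(r)<1/2$ there, whence $r^{-1}f(rz)\in\mathcal{S}^*_p$ for every $0<r<\rho_i$.

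For the sharpness of (i) and (iii) I would use the extremal functions $F_1$ of \eqref{eq0.8} and $F_3$ of \eqref{eq2.5}, exactly as in the proof of Theorem~\ref{th1}. Under the sign normalisations that remove the absolute values in $M_i$ (namely $6b-4c\ge0$ and $4c-q\ge0$ for $H^1_{b,c}$, and $4b-q\ge0$ for $H^3_b$, together with $q\ge0$), evaluating at $z=-\rho_i$ yields $zF_i'(z)/F_i(z)=1-M_i(\rho_i)=1/2$, which is the vertex of the parabola and hence lies on $\partial\Omega_{PAR}$, not in $\Omega_{PAR}$. So no $\rho'>\rho_i$ can serve as an $\mathcal{S}^*_p$ radius, because the disc $\{|z|<\rho'\}$ contains the point $z=-\rho_i$ at which $zF_i'/F_i\notin\Omega_{PAR}$; thus $\rho_i$ is best possible. (For $H^2_{b,c}$ no sharpness is asserted, so only the inclusion part above is needed.) The sole laborious ingredient is the polynomial bookkeeping required to match the cleared expressions with the explicit $x_1$, $x_2$, $x_3$ in the statement; the geometric input — a disc inscribed in a parabola — is entirely elementary, which is why a single argument handles all three classes.
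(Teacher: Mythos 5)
For parts (i) and (iii) your argument is essentially the paper's: the authors quote the Shanmugam--Ravichandran inclusion \eqref{eq4.1} (the disc $|w-1|<1/2$ sits inside the parabolic region), reduce to $M_i(r)\le 1/2$, clear denominators, and check sharpness with $F_1$ and $F_3$ at $z=-\rho_i$ exactly as you do; your direct verification of the disc-in-parabola inclusion in place of the citation is harmless.

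Part (ii), however, has a genuine gap: the displayed polynomial $x_2(r)$ of the statement does \emph{not} arise from clearing denominators in $M_2(r)\le 1/2$. That computation produces a polynomial of degree $7$ (leading term $3n r^7$), whereas the stated $x_2$ has degree $8$ with leading coefficient $1$ and degree-$7$ coefficient $3m+4n+3q$; so your claim that ``$x_2$ arises in the same way from $M_2$'' is false, and your argument only yields parabolic starlikeness up to the smallest root of that different, smaller polynomial condition. The paper instead works directly with the defining inequality $\RE\bigl(zf'(z)/f(z)\bigr)>\bigl|zf'(z)/f(z)-1\bigr|$: it bounds the modulus from above by $M_2(r)$ as in \eqref{eq2.3}, but bounds the real part from below by the sharper estimate obtained from McCarty's Lemma~\ref{lm2} (applied, via \eqref{eqn2}, to the factor $k=g/(zp)\in\mathcal{P}(1/2)$, which is why the root is located in $(0,1/3)$, where \eqref{eqn2} is valid), and then demands ``lower bound for $\RE$ $\ge$ upper bound for the modulus''. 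Since the Lemma~\ref{lm2} lower bound is strictly better than $1-M_2(r)$, this condition holds on a strictly larger interval than $M_2(r)\le 1/2$; it is exactly this refinement that produces the stated degree-$8$ polynomial and the improved radius $0.0990195>0.0972$ highlighted in the remark after the theorem. In short, your proof establishes a valid but smaller $\mathcal{S}_p^*$ radius for $H^2_{b,c}$ and therefore does not prove part (ii) as stated; to repair it you must replace the inscribed-disc step for this class by the real-part estimate coming from Lemma~\ref{lm2} together with \eqref{eqn2}.
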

\begin{proof}
\begin{enumerate}[(i)]
    \item Note that $x_1(0)=-1 <0$ and $x_1(1)=12 (2 + d) (2 + q) (2 + s)>0$ and thus in view of the intermediate value theorem, there exists a root  of the  equation $x_1(r)=0$  in the interval $(0,1)$. Let  $r=\rho_1\in(0, 1)$ be the smallest root of the equation $x_1(r)=0$.
      Shanmughan and Ravichandran \cite[p. 321]{TN} had proved, for $1/2<C<3/2,$ that
      \begin{equation}\label{eq4.1}
      \{w\in\C: |w-C|<C - 1/2\} \subset \{w\in\C: \RE(w) > |w-1|\}.
      \end{equation}
      As  the centre of the disc in \eqref{eq3.1} is 1,  by \eqref{eq4.1}, $f\in \mathcal{S}^*_p$ if
       \begin{equation}\label{eq4.2}
       \frac{r}{(1-r^2)}\left( \frac{dr^2+4r+d}{r^2+dr+1}+\frac{sr^2+4r+s}{r^2+sr+1}+\frac{q r^2+4r+q}{r^2+q r+1}\right)\leq\frac{1}{2}
       \end{equation}
       which is equivalent to $f\in \mathcal{S}^*_p$ if  $x_1(r)\leq0$. Since $x_1(0)<0$ and $\rho_1$ is the smallest root of the equation $x_1(r)=0$,  $x_1(r)$ is an increasing function on $(0, \rho_1)$. In view of this  $f\in \mathcal{S}^*_p$ for $|z|=r\leq\rho_1$.
       For $u=6b-4c\geq0$, $v=4c-q\geq0$, using \eqref{eq4.2},   the function $F_1(z)$ defined for the class $H^1_{b,c}$ in \eqref{eq0.8}  at $z=-\rho_1$, satisfies the  following equality
     \begin{align*}
    \quad\quad\RE\left(\frac{zF_1'(z)}{F_1(z)}\right)&=\left|1-\frac{\rho_1}{(1-\rho_1^2)}\left( \frac{u\rho_1^2+4\rho_1+u}{\rho_1^2+u\rho_1+1}+\frac{v\rho_1^2+4\rho_1+v}{\rho_1^2+v\rho_1+1}+\frac{q \rho_1^2+4\rho_1+q}{\rho_1^2+q \rho_1+1}\right)\right|\\
    &=\left|\frac{zF_1'(z)}{F_1(z)}-1\right|.
         \end{align*}
 Thus, the radius is sharp.
%
    \item   A calculation shows that $x_2(0)=-1<0$ and \[x_2\left(\frac{1}{3}\right)= \frac{4 (9 m (190+89 q+n (146+63 q))+2 (1250+855 q+3 n (410+219 q)))}{6561}\] which is greater than 0. By  intermediate value theorem, there exists a root $r\in(0, 1/3)$ of the equation $x_2(r)$. Let $\rho_2\in(0, 1/3)$  be the smallest root of the equation $x_2(r)=0$ and $f\in H^2_{b,c}$.
        From \eqref{eq1.01} and \eqref{eq1.1}, and using Lemma \ref{lm2} together with \eqref{eqn2}, we have
     \begin{align*}
     &\RE\left(\frac{zf'(z)}{f(z)}\right)\\
     &\geq 1-\left(\frac{r}{1-r^2}\right)\left(\frac{m r^2+4r+m}{r^2+m r+1}+\frac{q r^2+4r+q}{r^2+q r+1}\right)-\frac{r \left(n+2 r+n r^2\right)}{(1+n r) \left(1+2 n r+r^2\right)}\\
     &\geq\frac{r}{(1-r^2)}\left( \frac{mr^2+4r+m}{r^2+mr+1}+\frac{nr^2+2r+n}{nr+1}+\frac{q r^2+4r+q}{r^2+q r+1}\right)\geq
     \left|\frac{zf_2'(z)}{f_2(z)}-1\right|\nonumber
     \end{align*}
      whenever $x_2(r)\leq0$.
      Since $x_2(0)<0$ and $\rho_2$ is the smallest root of the equation $x_2(r)=0$,  $x_2(r)$ is an increasing function on $(0, \rho_2)$. Thus, $f\in \mathcal{S}^*_p$ for $|z|=r\leq\rho_2$.

    \item It is easy to see that $x_3(0)=1<0$ and $x_3(1)=8 (2+l) (2+q)>0$. By intermediate value
theorem, there exists a root $r\in(0, 1)$ of the equation $x_3(r) = 0$. Let  $\rho_3\in(0, 1)$ be the smallest root of the equation $x_3(r)=0$.
In view of  \eqref{eq4.1} and the fact that  the centre of the disc in \eqref{eq3.5} is 1, $f\in \mathcal{S}^*_p$ if
       \begin{equation}\label{eq3.6}
       \frac{r}{1-r^2} \left(\frac{l r^2+4 r+l}{r^2+l r+1}+\frac{q r^2+4 r+q}{r^2+q r +1}\right)\leq\frac{1}{2}
       \end{equation}
    which is equivalent to $f\in \mathcal{S}^*_p$ if  $x_3(r)\leq0$. Since $x_3(0)<0$ and $\rho_3$ is the smallest root of the equation $x_3(r)=0$,  $x_3(r)$ is an increasing function on $(0, \rho_3)$. This proves that $f\in \mathcal{S}^*_p$ for $|z|=r\leq\rho_3$.

\noindent
 The result is sharp for the function $F_3$ defined for the class $H^3_{b}$ in \eqref{eq2.5}.  At $z=-\rho_3$, and for $u=4b-q\geq0$,
 it follows from \eqref{eq3.6} that
\[\quad\RE\left(\frac{zF_3'(z)}{F_3(z)}\right) = \left|1-\frac{\rho_3}{1-\rho_3^2} \left(\frac{u \rho_3^2+4 \rho_3+u}{\rho_3^2+u \rho_3+1}+\frac{q \rho_3^2+4 \rho_3+q}{\rho_3^2+q \rho_3 +1}\right)\right|=\left|\frac{zF_3'(z)}{F_3(z)}-1\right|.\qedhere\]
\end{enumerate}
\end{proof}
\begin{remark}
 Putting $b=1$ and $c=1$ and $q=2$ in  Theorem \ref{th3}, we get the result \cite[Theorem 3, p. 9]{lecko} with the part$(ii)$ having improved radius ($=0.0990195>0.0972$).
\end{remark}
\noindent
In 2015,  the class of starlike functions associated with the exponential function as $\mathcal{S}_e^*=\mathcal{S}^*(e^{z})$ was introduced by  Mendiratta \emph{et al.}\cite{mnr}. It satisfies the condition $|\log (zf'(z)/f(z))|<1$.
\begin{theorem}\label{th4}
The  $\mathcal{S}_e^*$ radii for the classes $H^1_{b,c}$,  $H^2_{b,c}$ and $H^3_{b}$ are as follows:
\begin{enumerate}[(i)]
    \item  For the class $H^1_{b,c}$, the sharp $\mathcal{S}_e^*$ radius $\rho_1\in(0, 1)$ is the smallest  root  of the equation $x_1(r)=0$, where\\
    $ x_1(r)=(1-e)+(d+q+s) r+(2+10 e+d q+d e q+(1+e) (d+q) s) r^2+(q+s+10 e (q+s)+d (1+q s+2 e (5+q s))) r^3+8 e (3+q s+d (q+s)) r^4+((-1+12 e) (q+s)+d (-1-q s+4 e (3+q s))) r^5+(-2+14 e-d q+3 d e q+(-1+3 e) (d+q) s) r^6+(-1+2 e) (d+q+s) r^7+(-1+e) r^8$.
    \item  For the class $H^2_{b,c}$,  $\mathcal{S}_e^*$ radius $\rho_2\in(0, 1)$ is the smallest  root  of the equation $x_2(r)=0$, where\\
    $x_2(r)=(1-e)+(m+n+q) r+(1+9 e+(1+e) (n q+m (n+q))) r^2+(8 e (m+q)+n (1+m q+2 e (5+m q))) r^3+(-1-m q+e (13+8 m n+5 m q+8 n q)) r^4+(-n-q+4 e (3 n+q)+m (-1+4 e) (1+n q)) r^5+(-1+3 e) (1+n (m+q)) r^6+n (-1+2 e)r^7.$
    \item  For the class $H^3_{b}$, the sharp $\mathcal{S}_e^*$ radius $\rho_3\in(0, 1)$ is the smallest  root  of the equation $x_3(r)=0$, where\\
    $x_3(r)=(1-e)+(l+q) r+(1+7 e+(1+e) l q) r^2+6 e (l+q) r^3+(-1+9 e+(-1+3 e) l q) r^4+(-1+2 e) (l+q) r^5+(-1+e) r^6 $.
\end{enumerate}
\end{theorem}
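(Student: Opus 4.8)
The plan is to follow exactly the template already used in Theorems \ref{th1}--\ref{th3}: reduce the $\mathcal{S}_e^*$ membership to a disc-containment condition, translate that condition into the polynomial inequality $x_i(r)\le 0$, locate the smallest root, and verify sharpness on the extremal functions. For each of the three classes I would invoke the known inclusion (Mendiratta \emph{et al.} \cite{mnr}) that for a suitable range of centres $C$,
\[
\left\{w\in\C:|w-C|<C-e^{-1}\right\}\subset\left\{w\in\C:|\log w|<1\right\},
\]
i.e. the right half $\{|\log w|<1\}$ contains the disc about $C$ of radius $C-e^{-1}$ (more precisely the admissible half-width is $\min\{C-e^{-1},\,e-C\}$, and for the relevant $C$ near $1$ the binding constraint is $C-e^{-1}$). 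Since the discs in \eqref{eq3.1}, \eqref{eq2.3}, \eqref{eq3.5} are all centred at $1$, membership in $\mathcal{S}_e^*$ follows as soon as the corresponding right-hand bound on $|zf'(z)/f(z)-1|$ is at most $1-e^{-1}$.

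The key steps, in order, for part (i): first, using \eqref{eq0.5}/\eqref{eq3.1}, $f\in\mathcal{S}_e^*$ whenever
\[
\frac{r}{1-r^2}\left(\frac{dr^2+4r+d}{r^2+dr+1}+\frac{sr^2+4r+s}{r^2+sr+1}+\frac{qr^2+4r+q}{r^2+qr+1}\right)\le 1-\frac1e.
\]
Second, clear denominators (multiplying through by $e(1-r^2)(r^2+dr+1)(r^2+sr+1)(r^2+qr+1)$, all positive on $(0,1)$) to see that this inequality is equivalent to $x_1(r)\le 0$, where $x_1$ is the degree-$8$ polynomial in the statement; one checks the constant term is $1-e<0$ and $x_1(1)$ is a positive multiple of $(2+d)(2+q)(2+s)$, so by the intermediate value theorem a root exists in $(0,1)$, and $\rho_1$ is its smallest root, with $x_1$ increasing on $(0,\rho_1)$ so the inequality holds on $[0,\rho_1]$. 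Third, sharpness: evaluate at $z=-\rho_1$ on $F_1$ from \eqref{eq0.8} with $u=6b-4c\ge0$, $v=4c-q\ge0$, where \eqref{eq0.5} holds with equality, giving $zF_1'(z)/F_1(z)=e^{-1}$, a boundary point of $\{|\log w|<1\}$, so $\rho_1$ cannot be enlarged. Parts (iii) is identical with the two-factor expressions \eqref{eq02.2}/\eqref{eq3.5} and extremal function $F_3$ from \eqref{eq2.5}. Part (ii) differs in the same way as in the earlier theorems: one uses \eqref{eq1.01}, \eqref{eq1.1} and Lemma \ref{lm2} with \eqref{eqn2} (valid for $0<r<1/3$) to get the one-sided estimate on $\RE(zf'(z)/f(z))$, leading to the degree-$7$ polynomial $x_2(r)$; here the interval is $(0,1/3)$ rather than $(0,1)$, one checks $x_2(0)=1-e<0$ and $x_2(1/3)>0$, and sharpness is witnessed by the function $F_2$ (equivalently $f_2$) already constructed, evaluated at $z=-\rho_2$ with $5b-3c\ge0$, $3c-q=1$.

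The main obstacle is purely computational rather than conceptual: verifying that clearing denominators in the inequality really produces precisely the stated polynomials $x_1,x_2,x_3$ — in particular keeping track of the coefficient $e$ correctly throughout the degree-$8$ expansion — and, for part (ii), re-deriving the exact form of $x_2$ from the Lemma \ref{lm2} estimate (with the $1/(1+nr)$, $(2+2nr)/(1+2nr+r^2)$ terms as in Theorem \ref{th1}(ii)) rather than from the cruder modulus bound. I would also double-check that the relevant centre $C$ stays in the range where $C-e^{-1}$ (not $e-C$) is the binding half-width of the inscribed disc; since here $C=1$ and $e^{-1}\approx0.368$ while $e\approx2.718$, this is comfortably satisfied, so no extra case analysis is needed.
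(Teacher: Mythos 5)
Your parts (i) and (iii) follow the paper's proof essentially verbatim: the Mendiratta inclusion $\{|w-C|<C-e^{-1}\}\subset\{|\log w|<1\}$ applied to the discs of \eqref{eq3.1} and \eqref{eq3.5} centred at $1$, the reduction to $x_1(r)\le 0$, $x_3(r)\le 0$ by clearing denominators, the IVT check $x(0)=1-e<0$, $x(1)>0$, and sharpness via $F_1$, $F_3$ at $z=-\rho$, where $zF'/F=e^{-1}$ lies on the boundary. That part is fine.

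Part (ii), however, contains a genuine gap. You propose to derive $x_2$ from the Lemma \ref{lm2} real-part estimate together with \eqref{eqn2} on $(0,1/3)$, ``rather than from the cruder modulus bound.'' This fails on two counts. First, a one-sided lower bound on $\RE(zf'(z)/f(z))$ can never establish membership in $\mathcal{S}_e^*$: the region $\{w:|\log w|<1\}$ is bounded (it sits inside $e^{-1}<|w|<e$), so no half-plane condition implies containment in it; the Lemma \ref{lm2} route is only adequate for half-plane-type targets such as $\mathcal{S}^*(\alpha)$ in Theorem \ref{th1}(ii). Second, the stated degree-$7$ polynomial $x_2$ is precisely what one gets by clearing denominators in the modulus inequality
\[
\frac{r}{1-r^2}\left( \frac{mr^2+4r+m}{r^2+mr+1}+\frac{nr^2+2r+n}{nr+1}+\frac{q r^2+4r+q}{r^2+q r+1}\right)\le 1-\frac{1}{e},
\]
i.e.\ from \eqref{eq2.3} combined with \eqref{eq5.1} (constant term $1-e$, leading coefficient $n(2e-1)$, degree $7$ from the factor $(1-r^2)(r^2+mr+1)(nr+1)(r^2+qr+1)$); the Lemma \ref{lm2} estimate would instead bring in the factor $(1+nr)(1+2nr+r^2)$ and produce a different, degree-$9$ polynomial, so your plan would not prove the stated result. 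Consequently the restriction to $(0,1/3)$ and the check $x_2(1/3)>0$ are artifacts of the wrong route (the paper works on $(0,1)$ and checks $x_2(1)=6e(2+m)(1+n)(2+q)>0$). Finally, note that the theorem deliberately does not assert sharpness in part (ii), and the paper proves none: equality in \eqref{eq2.3} at $z=-\rho_2$ would place $zf'/f$ at $1+(1-1/e)=2-e^{-1}$, an interior point of $\{|\log w|<1\}$, so your claim that sharpness is ``witnessed by $F_2$ (equivalently $f_2$) at $z=-\rho_2$'' is unsupported.
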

\begin{proof}
\begin{enumerate}[(i)]
    \item Note that $x_1(0)=1-e <0$ and $x_1(1)=6e(2 + d) (2 + q) (2 + s)>0$ and thus in view of the intermediate value theorem, there exists a root  of the  equation $x_1(r)=0$  in the interval $(0,1)$. Let  $r=\rho_1\in(0, 1)$ be the smallest root of the equation $x_1(r)=0$.
      Mendiratta \emph{et al.}\cite{mnr} proved, for $e^{-1} \le C \le (e+e^{-1})/2)$, that
      \begin{equation}\label{eq5.1}
      \{w \in \mathbb{C}: |w-C|< C-e^{-1}\} \subset \{w \in \mathbb{C}: |\log (w)|<1\}.
      \end{equation}
      As  the centre of the disc in \eqref{eq3.1} is 1,  by \eqref{eq5.1}, $f\in \mathcal{S}^*_e$ if
       \begin{equation}\label{eq5.2}
       \frac{r}{(1-r^2)}\left( \frac{dr^2+4r+d}{r^2+dr+1}+\frac{sr^2+4r+s}{r^2+sr+1}+\frac{q r^2+4r+q}{r^2+q r+1}\right)\leq1-\frac{1}{e}
       \end{equation}
       which is equivalent to $f\in \mathcal{S}^*_e$ if  $x_1(r)\leq0$. Since $x_1(0)<0$ and $\rho_1$ is the smallest root of the equation $x_1(r)=0$,  $x_1(r)$ is an increasing function on $(0, \rho_1)$. In view of this  $f\in \mathcal{S}^*_e$ for $|z|=r\leq\rho_1$.
       For $u=6b-4c\geq0$, $v=4c-q\geq0$, using \eqref{eq5.2},   the function $F_1(z)$ defined for the class $H^1_{b,c}$ in \eqref{eq0.8}  at $z=-\rho_1$, satisfies the  following equality
     \begin{align*}
    &\left|\log\left(\frac{zF_1'(z)}{F_1(z)}\right)\right|\\
    &=\left|\log\left(1-\frac{\rho_1}{(1-\rho_1^2)}\left( \frac{u\rho_1^2+4\rho_1+u}{\rho_1^2+u\rho_1+1}+\frac{v\rho_1^2+4\rho_1+v}{\rho_1^2+v\rho_1+1}+\frac{q \rho_1^2+4\rho_1+q}{\rho_1^2+q \rho_1+1}\right)\right)\right|\\
    &=\left|\log\left(\frac{1}{e}\right)\right|=1.
         \end{align*}
 Thus, the radius is sharp.
\item  A calculation shows that $x_2(0)=1-e<0$ and $x_2(1)= 6e(2+m)(1+n)(2+q)>0$. By  intermediate value theorem, there exists a root $r\in(0, 1)$ of the equation $x_2(r) = 0$. Let $\rho_2\in(0, 1)$  be the smallest root of the equation $x_2(r)=0$ and $f\in H^2_{b,c}$. In view of  \eqref{eq5.1} and the fact that the centre of the disc in \eqref{eq2.3} is 1,  $f\in \mathcal{S}^*_e$ if
       \begin{equation*}
       \frac{r}{(1-r^2)}\left( \frac{mr^2+4r+m}{r^2+mr+1}+\frac{nr^2+2r+n}{nr+1}+\frac{q r^2+4r+q}{r^2+q r+1}\right)\leq1-\frac{1}{e}
       \end{equation*}
    which is equivalent to $f\in \mathcal{S}^*_e$ if  $x_2(r)\leq0$. Since $x_2(0)<0$ and $\rho_2$ is the smallest root of the equation $x_2(r)=0$,  $x_2(r)$ is an increasing function on $(0, \rho_2)$. Thus, $f\in \mathcal{S}^*_e$ for $|z|=r\leq\rho_2$.
    \item It is easy to see that $x_3(0)=1-e<0$ and $x_3(1)=4e(2+l)(2+q)>0$. By intermediate  value
theorem, there exists a root $r\in(0, 1)$ of the equation $x_3(r) = 0$. Let  $\rho_3\in(0, 1)$ be the smallest root of the equation $x_3(r)=0$.
Since  the centre of the disc in \eqref{eq3.5} is 1,  by \eqref{eq5.1}, $f\in \mathcal{S}^*_e$ if
       \begin{equation}\label{eq5.6}
       \frac{r}{1-r^2} \left(\frac{l r^2+4 r+l}{r^2+l r+1}+\frac{q r^2+4 r+q}{r^2+q r +1}\right)\leq1-\frac{1}{e}
       \end{equation}
    which is equivalent to $f\in \mathcal{S}^*_e$ if  $x_3(r)\leq0$. Since $x_3(0)<0$ and $\rho_3$ is the smallest root of the equation $x_3(r)=0$,  $x_3(r)$ is an increasing function on $(0, \rho_3)$. This proves that $f\in \mathcal{S}^*_e$ for $|z|=r\leq\rho_3$.

 The result is sharp for the function $F_3$ defined for the class $H^3_{b}$ in \eqref{eq2.5}.  At $z=-\rho_3$, and for $u=4b-q\geq0$,
 it follows from \eqref{eq5.6} that
\begin{align*}
\left|\log\left(\frac{zF_3'(z)}{F_3(z)}\right)\right|& = \left|\log\left(1-\frac{\rho_3}{1-\rho_3^2} \left(\frac{u \rho_3^2+4 \rho_3+u}{\rho_3^2+u \rho_3+1}+\frac{q \rho_3^2+4 \rho_3+q}{\rho_3^2+q \rho_3 +1}\right)\right)\right|\\
&=\left|\log\left(\frac{1}{e}\right)\right|=1.\qedhere
\end{align*}
 \end{enumerate}
\end{proof}
\begin{remark}
For $b=1$ and $c=1$ and $q=2$,  Theorem \ref{th4}  reduces to the  result \cite[Theorem 4, p. 10]{lecko}.
\end{remark}

\noindent
The class $\mathcal{S}_c^*=\mathcal{S}^*(1+(4/3)z+(2/3)z^2)$ is the class of starlike functions $f \in \mathcal{A}$ such that  $zf'(z)/f(z)$ lies in the region  bounded by the cardiod $\Omega_c=\{u+iv: (9u^2+9v^2-18u+5)^2-16(9u^2+ 9 v^2-6u+1)=0\}.$  Sharma \emph{et al.} \cite{sharma} studied various properties of the class $\mathcal{S}_c^*$. The following theorem determines radii  for starlike functions associated with the cardiod
\begin{theorem}\label{th5}
The  $\mathcal{S}_c^*$ radii for the classes $H^1_{b,c}$,  $H^2_{b,c}$ and $H^3_{b}$ are as follows:
\begin{enumerate}[(i)]
    \item For the class $H^1_{b,c}$, the sharp $\mathcal{S}_c^*$ radius $\rho_1\in(0, 1)$ is the smallest  root  of the equation $x_1(r)=0$, where\\
    $x_1(r)=-2+(d+q+s) r+4 (8+q s+d (q+s)) r^2+(31 (q+s)+d (31+7 q s)) r^3+24 (3+q s+d (q+s)) r^4+(35 (q+s)+d (35+11 q s)) r^5+8 (5+q s+d (q+s)) r^6+5 (d+q+s) r^7+2 r^8$.
    \item For the class $H^2_{b,c}$, the  $\mathcal{S}_c^*$ radius $\rho_2\in(0, 1)$ is the smallest  root  of the equation $x_2(r)=0$, where\\
    $x_2(r)=-2+(m+n+q) r+4 (7+n q+m (n+q)) r^2+(31 n+24 q+m (24+7 n q)) r^3+(38+24 m n+24n q+14 m q) r^4+(35 n+11 q+11 m (1+n q)) r^5+8 (1+n (m+q)) r^6+5 n r^7$.
    \item For the class $H^3_{b}$, the sharp $\mathcal{S}_c^*$ radius $\rho_3\in(0, 1)$ is the smallest  root  of the equation $x_3(r)=0$, where\\
    $x_3(r)=-2+(l+q) r+(22+4 l q) r^2+18 (l+q) r^3+(26+8 l q) r^4+5 (l+q) r^5+2 r^6$.
\end{enumerate}
\end{theorem}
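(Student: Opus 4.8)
I plan to follow the template used for Theorems \ref{th1}--\ref{th4}, the only new ingredient being the disc-inclusion lemma for the cardioid region. Recall that Sharma \emph{et al.} \cite{sharma} proved that $\{w\in\C:|w-C|<C-\tfrac13\}\subset\Omega_c$ for $1/3<C\le5/3$; the case $C=1$ gives $\{w\in\C:|w-1|<\tfrac23\}\subset\Omega_c$. Since in \eqref{eq3.1}, \eqref{eq2.3} and \eqref{eq3.5} the disc containing $zf'(z)/f(z)$ is centred at $1$, a function in $H^1_{b,c}$ (resp. $H^2_{b,c}$, $H^3_b$) lies in $\mathcal{S}_c^*$ as soon as the corresponding bound on $|zf'(z)/f(z)-1|$ does not exceed $\tfrac23$.

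For parts (i) and (iii): requiring the right-hand side of \eqref{eq3.1} (resp. \eqref{eq3.5}) to be $\le\tfrac23$ and clearing the positive denominators turns the condition into $x_1(r)\le0$ (resp. $x_3(r)\le0$) with $x_1,x_3$ the displayed polynomials. One checks $x_1(0)=-2<0$, $x_1(1)=18(2+d)(2+s)(2+q)>0$ and $x_3(0)=-2<0$, $x_3(1)=12(2+l)(2+q)>0$, so the intermediate value theorem produces smallest roots $\rho_1,\rho_3\in(0,1)$; since $x_i(0)<0$, each $x_i$ is negative on $[0,\rho_i)$ and the inclusion holds there. Sharpness follows from the extremal functions $F_1$ of \eqref{eq0.8} and $F_3$ of \eqref{eq2.5}: evaluated at $z=-\rho_1$ (resp. $z=-\rho_3$) with the second coefficients taken so that $u=6b-4c\ge0$, $v=4c-q\ge0$ (resp. $u=4b-q\ge0$), Lemma \ref{lm1} and the triangle inequality \eqref{eq0.2} (resp. \eqref{eq2.1}) hold with equality, whence $zF_i'(z)/F_i(z)=1-\tfrac23=\tfrac13$; and $1+(4/3)(-1)+(2/3)(-1)^2=\tfrac13$ is exactly the value at $z=-1$ of the cardioid function $1+(4/3)z+(2/3)z^2$, i.e. the cusp (left-most point) of $\partial\Omega_c$, so neither $\rho_1$ nor $\rho_3$ can be enlarged.

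Part (ii) is treated the same way, but starting from the coarser bound \eqref{eq2.3}, so that no appeal to Lemma \ref{lm2} is required: requiring its right-hand side to be $\le\tfrac23$ and clearing denominators gives $x_2(r)\le0$ with $x_2(0)=-2<0$ and $x_2(1)=18(2+m)(1+n)(2+q)>0$, hence a smallest root $\rho_2\in(0,1)$ and $f\in\mathcal{S}_c^*$ for $|z|\le\rho_2$. As in Theorems \ref{th3}(ii) and \ref{th4}(ii), this estimate discards too much to yield sharpness, which is why part (ii) is stated without the word \emph{sharp}.

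The proof is thus entirely routine except for one bookkeeping step, which I expect to be the only genuine chore: for each $i$, expanding $3rN_i(r)-2(1-r^2)D_i(r)$, where $N_i$ is the numerator and $D_i$ the product of denominators in the bound for $H^i$, and checking that it equals, coefficient by coefficient, the degree $8$, $7$ and $6$ polynomials $x_1,x_2,x_3$ of the statement, together with the two endpoint sign checks. No step uses an idea beyond those already employed for $\mathcal{S}^*(\alpha)$, $\mathcal{S}_L^*$, $\mathcal{S}_p^*$ and $\mathcal{S}_e^*$; the only feature special to the cardioid is that its left-most boundary point is the cusp $\tfrac13=1-\tfrac23$, which is precisely the value forced on $zF_i'(z)/F_i(z)$ at $z=-\rho_i$, and this is what pins down the sharpness in parts (i) and (iii).
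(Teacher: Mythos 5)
Your proposal matches the paper's proof in all essentials: the Sharma et al.\ disc inclusion applied with centre $1$ (radius $2/3$), the reduction of the bounds \eqref{eq3.1}, \eqref{eq2.3}, \eqref{eq3.5} to $x_i(r)\le 0$ by clearing denominators, the same endpoint sign checks $x_1(1)=18(2+d)(2+s)(2+q)$, $x_2(1)=18(2+m)(1+n)(2+q)$, $x_3(1)=12(2+l)(2+q)$ with the intermediate value theorem, sharpness via $F_1$ and $F_3$ at $z=-\rho_i$ hitting the boundary value $1/3$ of $\Omega_c$, and the non-sharp treatment of part (ii) directly from \eqref{eq2.3}. No substantive difference from the paper's argument.
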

\begin{proof}
\begin{enumerate}[(i)]
    \item
Note that $x_1(0)=-2 <0$ and $x_1(1)=18(2 + d) (2 + q) (2 + s)>0$ and thus in view of the intermediate value theorem, there exists a root  of the  equation $x_1(r)=0$  in the interval $(0,1)$. Let  $r=\rho_1\in(0, 1)$ be the smallest root of the equation $x_1(r)=0$.
      Sharma \emph{et al.} \cite{sharma} proved that for $1/3<C \le 5/3$,
      \begin{equation}\label{eq6.1}
      \{ w \in \mathbb{C}: |w-C|<(3C-1)/3\} \subseteq \Omega_c.
      \end{equation}
      As  the centre of the disc in \eqref{eq3.1} is 1,  by \eqref{eq6.1}, $f\in \mathcal{S}^*_c$ if
       \begin{equation}\label{eq6.2}
       \frac{r}{(1-r^2)}\left( \frac{dr^2+4r+d}{r^2+dr+1}+\frac{sr^2+4r+s}{r^2+sr+1}+\frac{q r^2+4r+q}{r^2+q r+1}\right)\leq\frac{2}{3}
       \end{equation}
       which is equivalent to $f\in \mathcal{S}^*_c$ if  $x_1(r)\leq0$. Since $x_1(0)<0$ and $\rho_1$ is the smallest root of the equation $x_1(r)=0$,  $x_1(r)$ is an increasing function on $(0, \rho_1)$. In view of this  $f\in \mathcal{S}^*_c$ for $|z|=r\leq\rho_1$.
       For $u=6b-4c\geq0$, $v=4c-q\geq0$, using \eqref{eq6.2},   the function $F_1(z)$ defined for the class $H^1_{b,c}$ in \eqref{eq0.8}  at $z=-\rho_1$, satisfies the  following equality
     \begin{align*}
    \left|\frac{zF_1'(z)}{F_1(z)}\right|=\left|1-\frac{\rho_1}{(1-\rho_1^2)}\left( \frac{u\rho_1^2+4\rho_1+u}{\rho_1^2+u\rho_1+1}+\frac{v\rho_1^2+4\rho_1+v}{\rho_1^2+v\rho_1+1}+\frac{q \rho_1^2+4\rho_1+q}{\rho_1^2+q \rho_1+1}\right)\right|=\frac{1}{3},
             \end{align*}
 which belongs to boundary of the region $\Omega_c$. Thus, the radius is sharp.

     \item  A calculation shows that $x_2(0)=-2<0$ and $x_2(1)= 18(2+m)(1+n)(2+q)>0$. By  intermediate value theorem, there exists a root $r\in(0, 1)$ of the equation $x_2(r) = 0$. Let $\rho_2\in(0, 1)$  be the smallest root of the equation $x_2(r)=0$ and $f\in H^2_{b,c}$. In view of  \eqref{eq6.1} and the fact that   the centre of the disc in \eqref{eq2.3} is 1,   $f\in \mathcal{S}^*_c$ if
       \begin{equation*}
       \frac{r}{(1-r^2)}\left( \frac{mr^2+4r+m}{r^2+mr+1}+\frac{nr^2+2r+n}{nr+1}+\frac{q r^2+4r+q}{r^2+q r+1}\right)\leq\frac{2}{3}
       \end{equation*}
    which is equivalent to $f\in \mathcal{S}^*_c$ if  $x_2(r)\leq0$. Since $x_2(0)<0$ and $\rho_2$ is the smallest root of the equation $x_2(r)=0$,  $x_2(r)$ is an increasing function on $(0, \rho_2)$. Thus, $f\in \mathcal{S}^*_c$ for $|z|=r\leq\rho_2$.
\item It is easy to see that $x_3(0)=-2<0$ and $x_3(1)=12(2+l)(2+q)>0$. By intermediate value
theorem, there exists a root $r\in(0, 1)$ of the equation $x_3(r) = 0$. Let  $\rho_3\in(0, 1)$ be the smallest root of the equation $x_3(r)=0$.
In view of   the fact that the  centre of the disc in \eqref{eq3.5} is 1,  by \eqref{eq6.1}, $f\in \mathcal{S}^*_c$ if
       \begin{equation}\label{eq6.4}
       \frac{r}{1-r^2} \left(\frac{l r^2+4 r+l}{r^2+l r+1}+\frac{q r^2+4 r+q}{r^2+q r +1}\right)\leq\frac{2}{3}
       \end{equation}
    which is equivalent to $f\in \mathcal{S}^*_e$ if  $x_3(r)\leq0$. Since $x_3(0)<0$ and $\rho_3$ is the smallest root of the equation $x_3(r)=0$,  $x_3(r)$ is an increasing function on $(0, \rho_3)$. This proves that $f\in \mathcal{S}^*_c$ for $|z|=r\leq\rho_3$.

 The result is sharp for the function $F_3$ defined for the class $H^3_{b}$ in \eqref{eq2.5}.  At $z=-\rho_3$, and for $u=4b-q\geq0$,
 it follows from \eqref{eq6.4} that
\begin{align*}
\left|\frac{zF_3'(z)}{F_3(z)}\right|& = \left|1-\frac{\rho_3}{1-\rho_3^2} \left(\frac{u \rho_3^2+4 \rho_3+u}{\rho_3^2+u \rho_3+1}+\frac{q \rho_3^2+4 \rho_3+q}{\rho_3^2+q \rho_3 +1}\right)\right|=\frac{1}{3}. \qedhere
\end{align*}
 \end{enumerate}
\end{proof}
\begin{remark}
 Putting $b=1$ and $c=1$ and $q=2$ in  Theorem \ref{th5}, we get the result \cite[Theorem 5, p. 11]{lecko}.
\end{remark}
\noindent
In 2019, Cho \emph{et al.} \cite{cho} considered the class of starlike functions $\mathcal{S}_{\sin}^*=\{f \in \A: zf'(z)/f(z) \prec 1+\sin(z):= q_0(z)\}$ associated with the sine function.
\begin{theorem}\label{th6}
The sharp $\mathcal{S}_{\sin}^*$ radii for the classes $H^1_{b,c}$,  $H^2_{b,c}$ and $H^3_{b}$ are as follows:
\begin{enumerate}[(i)]
    \item For the class $H^1_{b,c}$, the sharp $\mathcal{S}_{\sin}^*$ radius $\rho_1\in(0, 1)$ is the smallest  root  of the equation $x_1(r)=0$, where\\
    $x_1(r)=-\sin(1)-(d+q+s) (-1+\sin(1)) r-2 (-6+\sin(1)-(q s+d (q+s)) (\sin(1)-2)) r^2+(11 (q+s)+d (11+3 q s)-(d+q+s+d q s) \sin(1)) r^3+8 (3+q s+d (q+s)) r^4+((q+s) (11+\sin(1))+d (11+\sin(1)+q s (3+\sin(1)))) r^5+((q s+d (q+s)) (2+\sin(1))+2 (6+\sin(1))) r^6+(d+q+s) (1+\sin(1)) r^7+\sin(1) r^8$.
    \item For the class $H^2_{b,c}$, the sharp $\mathcal{S}_{\sin}^*$ radius $\rho_2\in(0, 1)$ is the smallest  root  of the equation $x_2(r)=0$, where\\
    $x_2(r)=-\sin(1)-(m+n+q) (\sin(1)-1) r+(10-(n q+m (n+q)) (-2+\sin(1))-\sin(1)) r^2+(8 m+11 n+8 q+3 m n q-n (1+m q) \sin(1)) r^3+(12+8 n q+\sin(1)+m (8 n+q (4+\sin(1)))) r^4+(q (3+\sin(1))+m (1+n q) (3+\sin(1))+n (11+\sin(1))) r^5+(1+n (m+q)) (2+\sin(1)) r^6+n (1+\sin(1)) r^7$.
    \item For the class $H^3_{b}$, the sharp $\mathcal{S}_{\sin}^*$ radius $\rho_3\in(0, 1)$ is the smallest  root  of the equation $x_3(r)=0$, where\\
     $x_3(r)= -\sin(1)-(l+q) (\sin(1)-1) r+(8-l q (-2+\sin(1))-\sin(1)) r^2+6 (l+q) r^3+(8+\sin(1)+l q (2+\sin(1))) r^4+(l+q) (1+\sin(1)) r^5+\sin(1) r^6$.
\end{enumerate}
\end{theorem}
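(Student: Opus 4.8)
The plan is to run the same argument used for Theorems \ref{th1}--\ref{th5}, with the region-containment lemma tailored to the sine class. Writing $q_0(z)=1+\sin z$, one has from \cite{cho} the inclusion $\{w\in\C:|w-1|<\sin(1)\}\subseteq q_0(\Delta)$, the nearest point of $\partial q_0(\Delta)$ to the centre $1$ being $q_0(\pm1)=1\pm\sin(1)$, at distance $\sin(1)$. Since the discs bounding $|zf'(z)/f(z)-1|$ in \eqref{eq3.1} (for $H^1_{b,c}$), \eqref{eq2.3} (for $H^2_{b,c}$) and \eqref{eq3.5} (for $H^3_b$) are all centred at $1$, it follows that $f\in\mathcal{S}^*_{\sin}$ whenever the corresponding radius bound is at most $\sin(1)$.

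For each of the three parts I would proceed in four steps. First, note that $x_i(0)=-\sin(1)<0$ while $x_i(1)>0$ (the $\sin(1)$-contributions cancel at $r=1$, leaving $x_1(1)=6(2+d)(2+s)(2+q)$, $x_2(1)=6(2+m)(1+n)(2+q)$, $x_3(1)=4(2+l)(2+q)$), so the intermediate value theorem yields a smallest root $\rho_i\in(0,1)$. Second, multiply the inequality ``radius bound $\le\sin(1)$'' through by the positive product of the relevant quadratic (and, for $H^2_{b,c}$, linear) denominators and by $1-r^2$; this converts it into $x_i(r)\le0$ with $x_i$ the displayed polynomial (degree $8$, $7$, $6$ respectively). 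Third, since $x_i(0)<0$ and $\rho_i$ is the least zero of $x_i$, the function $x_i$ stays negative on $(0,\rho_i)$, so the radius inequality holds and hence $f\in\mathcal{S}^*_{\sin}$ for $|z|=r\le\rho_i$. Fourth, sharpness follows by evaluating the extremal functions constructed in the previous section at $z=-\rho_i$, under the sign normalisations $6b-4c\ge0,\ 4c-q\ge0$ (resp. $5b-3c\ge0,\ 3c-q\ge0$; resp. $4b-q\ge0$): all the quadratic and linear factors then have real coefficients, so $zf'(z)/f(z)$ is real at $z=-\rho_i$, and the equality case of the pertinent estimate forces its value to equal $1\pm\sin(1)=q_0(\pm1)\in\partial q_0(\Delta)$, so no larger radius is admissible. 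Concretely, for $H^1_{b,c}$ one may take $F_1$ of \eqref{eq0.8} (value $1-\sin(1)$) or $f_1$ of \eqref{eq0.6} (value $1+\sin(1)$), for $H^2_{b,c}$ the function $f_2$ of \eqref{eq1.3}, and for $H^3_b$ the function $F_3$ of \eqref{eq2.5}.

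The only laborious step is the algebraic reduction in Step~2: one expands $r\,N(r)$ against $\sin(1)\,(1-r^2)\prod(r^2+ar+1)$ (with one quadratic replaced by the linear factor $nr+1$ in the $H^2_{b,c}$ case) and collects by powers of $r$ to recognise the stated $x_i$. The point demanding the most care is $H^2_{b,c}$: there $k=g/(zp)\in\mathcal{P}(1/2)$, so Lemma \ref{lm1} is applied with $\alpha=1/2$ and contributes the asymmetric term $\frac{nr^2+2r+n}{nr+1}$ instead of a third fraction $\frac{\ast r^2+4r+\ast}{r^2+\ast r+1}$, which is exactly why $x_2$ has degree $7$ rather than $8$. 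I anticipate no conceptual obstacle: the disc lemma of \cite{cho}, the mapping estimates \eqref{eq3.1}, \eqref{eq2.3}, \eqref{eq3.5}, and the extremal functions are all already available, so the argument is a template application once the polynomials are checked.
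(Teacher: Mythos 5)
Your proposal is correct and follows essentially the same route as the paper: the Cho et al.\ disc inclusion centred at $1$, the IVT endpoints $x_i(0)=-\sin(1)<0$, $x_i(1)>0$, clearing denominators to get $x_i(r)\le 0$, and sharpness via the extremal functions at $z=-\rho_i$. The only (harmless) deviation is cosmetic: for parts (i) and (iii) the paper evaluates $f_1$ and $f_3$ to hit the boundary point $1+\sin(1)=q_0(1)$, whereas you also allow $F_1$, $F_3$ with value $1-\sin(1)=q_0(-1)$, which works equally well since that point likewise lies on $\partial q_0(\Delta)$.
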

\begin{proof}
\begin{enumerate}[(i)]
    \item Note that $x_1(0)=-\sin(1) <0$ and $x_1(1)=6(2 + d) (2 + q) (2 + s)>0$ and thus in view of the intermediate value theorem, there exists a root  of the  equation $x_1(r)=0$  in the interval $(0,1)$. Let  $r=\rho_1\in(0, 1)$ be the smallest root of the equation $x_1(r)=0$.
      For $|C-1| \le \sin(1),$ Cho \emph{et al.} \cite{cho} established the following inclusion:
      \begin{equation}\label{eq7.1}
      \{w \in \mathbb{C}: |w-C|<\sin(1)-|C-1|\} \subseteq \Omega_s,
      \end{equation}
      where $\Omega_s:=q_0(\Delta)$ is the image of the unit disc $\Delta$ under the mappings $q_0(z)=1+\sin(z)$.
      As the centre of the disc in \eqref{eq3.1} is 1,  by \eqref{eq7.1}, $f\in \mathcal{S}^*_{\sin}$ if
       \begin{equation}\label{eq7.2}
       \frac{r}{(1-r^2)}\left( \frac{dr^2+4r+d}{r^2+dr+1}+\frac{sr^2+4r+s}{r^2+sr+1}+\frac{q r^2+4r+q}{r^2+q r+1}\right)\leq\sin(1)
       \end{equation}
       which is equivalent to $f\in \mathcal{S}^*_{\sin}$ if  $x_1(r)\leq0$. Since $x_1(0)<0$ and $\rho_1$ is the smallest root of the equation $x_1(r)=0$,  $x_1(r)$ is an increasing function on $(0, \rho_1)$. In view of this  $f\in \mathcal{S}^*_{\sin}$ for $|z|=r\leq\rho_1$.
       For $u=6b-4c\geq0$, $v=4c-q\geq0$, using \eqref{eq7.2},   the function $f_1(z)$ defined for class $H^1_{b,c}$ in \eqref{eq0.6}  at $z=-\rho_1$, satisfies the  following equality
     \begin{align*}
    \left|\frac{zf_1'(z)}{f_1(z)}\right|&=\left|1+\frac{\rho_1}{(1-\rho_1^2)}\left( \frac{u\rho_1^2+4\rho_1+u}{\rho_1^2+u\rho_1+1}+\frac{v\rho_1^2+4\rho_1+v}{\rho_1^2+v\rho_1+1}+\frac{q \rho_1^2+4\rho_1+q}{\rho_1^2+q \rho_1+1}\right)\right|\\
    &=1+\sin{1}=q_0(1),
    \end{align*}
  which belongs to the boundary of region $\Omega_s$. Thus, the radius is sharp.
%
    \item A calculation shows that $x_2(0)=-\sin(1)<0$ and $x_2(1)= 6(2+m)(1+n)(2+q)>0$. By  intermediate value theorem, there exists a root $r\in(0, 1)$ of the equation $x_2(r) = 0$. Let $\rho_2\in(0, 1)$  be the smallest root of  equation $x_2(r)=0$ and $f\in H^2_{b,c}$. In view of \eqref{eq7.1} and the fact that  centre of the disc in \eqref{eq2.3} is 1,  $f\in \mathcal{S}^*_{\sin}$ if
       \begin{equation}\label{eq7.3}
       \frac{r}{(1-r^2)}\left( \frac{mr^2+4r+m}{r^2+mr+1}+\frac{nr^2+2r+n}{nr+1}+\frac{q r^2+4r+q}{r^2+q r+1}\right)\leq\sin(1)
       \end{equation}
    which is equivalent to $f\in \mathcal{S}^*_{\sin}$ if  $x_2(r)\leq0$. Since $x_2(0)<0$ and $\rho_2$ is the smallest root of the equation $x_2(r)=0$,  $x_2(r)$ is an increasing function on $(0, \rho_2)$. Thus, $f\in \mathcal{S}^*_{\sin}$ for $|z|=r\leq\rho_2$. To prove sharpness, consider the function $f_2$ defined in \eqref{eq1.3}. For $u=5b-3c\geq0$, $v=3c-q\geq0$ and $z:=-\rho_2$, it follows from \eqref{eq7.3} that
      \begin{align*}
      \left|\frac{zf_2'(z)}{f_2(z)}\right|&=\left|1+\frac{\rho_2}{(1-\rho_2^2)}\left( \frac{u\rho_2^2+4\rho_2+u}{\rho_2^2+u\rho_2+1}+\frac{v\rho_2^2+2\rho_2+v}{v\rho_2+1}+\frac{q \rho_2^2+4\rho_2+q}{\rho_2^2+q \rho_2+1}\right)\right|\\
      &=1+\sin(1)
      \end{align*} which illustrates the sharpness.

    \item It is easy to see that $x_3(0)=-\sin(1)<0$ and $x_3(1)=4(2+l)(2+q)>0$. By intermediate  value
theorem, there exists a root $r\in(0, 1)$ of the equation $x_3(r) = 0$. Let  $\rho_3\in(0, 1)$ be the smallest root of the equation $x_3(r)=0$.
Since  the centre of the disc in \eqref{eq3.5} is 1,  by \eqref{eq7.1}, $f\in \mathcal{S}^*_{\sin}$ if
       \begin{equation}\label{eq7.4}
       \frac{r}{1-r^2} \left(\frac{l r^2+4 r+l}{r^2+l r+1}+\frac{q r^2+4 r+q}{r^2+q r +1}\right)\leq\sin(1)
       \end{equation}
    which is equivalent to $f\in \mathcal{S}^*_{\sin}$ if  $x_3(r)\leq0$. Since $x_3(0)<0$ and $\rho_3$ is the smallest root of the equation $x_3(r)=0$,  $x_3(r)$ is an increasing function on $(0, \rho_3).$ This proves that $f\in \mathcal{S}^*_{\sin}$ for $|z|=r\leq\rho_3$.

\noindent
 The result is sharp for  function $f_3$ defined for the class $H^3_{b}$ in \eqref{eq0.10}.  At $z=-\rho_3$ and for $u=4b-q\geq0$,
 it follows from \eqref{eq7.4} that
\begin{align*}
\left|\frac{zf_3'(z)}{f_3(z)}\right| = \left|1+\frac{\rho_3}{1-\rho_3^2} \left(\frac{u \rho_3^2+4 \rho_3+u}{\rho_3^2+u \rho_3+1}+\frac{q \rho_3^2+4 \rho_3+q}{\rho_3^2+q \rho_3 +1}\right)\right|=1+\sin(1).
\end{align*}
\end{enumerate}
\end{proof}
\begin{remark}
 Substituting  $b=1$ and $c=1$ and $q=2$ in  Theorem \ref{th6}, we obtain the result \cite[Theorem 6, p. 13]{lecko}.
\end{remark}
\noindent
In 2015, Raina and Sok\'{o}l \cite{raina}  introduced the class $\mathcal{S}_{\leftmoon}^*=\mathcal{S}^*(z+\sqrt{1+z^2})$. Geometrically, a function $f \in \mathcal{S}_{\leftmoon}^*$ if and only if $zf'(z)/f(z)$ lies in the region bounded by the  lune shaped region $\{w \in \mathbb{C}: |w^2-1|< 2|w|\}.$
\begin{theorem}\label{th7}
 The $\mathcal{S}_{\leftmoon}^*$ radii for the classes $H^1_{b,c}$,  $H^2_{b,c}$ and $H^3_{b}$ are as follows:
\begin{enumerate}[(i)]
    \item For the class $H^1_{b,c}$, the sharp $\mathcal{S}_{\leftmoon}^*$ radius $\rho_1\in(0, 1)$ is the smallest  root  of the equation $x_1(r)=0$, where\\
   $x_1(r)=\sqrt{2}-2+(\sqrt{2}-1) (d+q+s) r+(2 (4+\sqrt{2})+\sqrt{2} (q s+d (q+s))) r^2+((9+\sqrt{2}) (q+s)+d (9+\sqrt{2}+(1+\sqrt{2}) q s)) r^3+8 (3+q s+d (q+s)) r^4+(-(-13+\sqrt{2}) (q+s)-d (-13+\sqrt{2}+(-5+\sqrt{2}) q s)) r^5+(-2 (-8+\sqrt{2})-(-4+\sqrt{2}) (q s+d (q+s))) r^6-(-3+\sqrt{2}) (d+q+s) r^7+(2-\sqrt{2}) r^8$.
    \item For the class $H^2_{b,c}$, the $\mathcal{S}_{\leftmoon}^*$ radius $\rho_2\in(0, 1)$ is the smallest  root  of the equation $x_2(r)=0$, where\\
    $x_2(r)=\sqrt{2}-2+(\sqrt{2}-1) (m+n+q) r+(8+\sqrt{2}+\sqrt{2} (n q+m (n+q))) r^2+(8 (m+q)+n (9+\sqrt{2}+(1+\sqrt{2}) m q)) r^3+(14-\sqrt{2}+6 m q-\sqrt{2} m q+8 n (m+q)) r^4+(-(-5+\sqrt{2}) (m+q)-n (-13+\sqrt{2}+(-5+\sqrt{2}) m q)) r^5-(-4+\sqrt{2}) (1+n (m+q)) r^6-(\sqrt{2}-3) n r^7$.
    \item  For the class $H^3_{b}$, the sharp $\mathcal{S}_{\leftmoon}^*$ radius $\rho_3\in(0, 1)$ is the smallest  root  of the equation $x_3(r)=0$, where\\
    $x_3(r)=\sqrt{2}-2+(\sqrt{2}-1) (l+q) r+(6+\sqrt{2}+\sqrt{2} l q) r^2+6 (l+q) r^3+(10-\sqrt{2}-(-4+\sqrt{2}) l q) r^4-(-3+\sqrt{2}) (l+q) r^5+(2-\sqrt{2}) r^6$.
\end{enumerate}
\end{theorem}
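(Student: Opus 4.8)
The plan is to run the same scheme as in Theorems~\ref{th2}--\ref{th6}, now with the lune $\Omega_{\leftmoon}:=\{w\in\mathbb{C}:|w^2-1|<2|w|\}$ as the target region, so that $f\in\mathcal{S}^*_{\leftmoon}$ exactly when $zf'/f$ maps $\Delta$ into $\Omega_{\leftmoon}$. Two elementary facts about $\Omega_{\leftmoon}$ do the work: it meets the positive real axis in the open interval $(\sqrt2-1,\sqrt2+1)$, and there is a disk--inclusion statement of the form ``for $C$ in a suitable interval about $1$ one has $\{w:|w-C|<C-(\sqrt2-1)\}\subseteq\Omega_{\leftmoon}$'', the upper boundary point $2C-(\sqrt2-1)$ staying inside $\Omega_{\leftmoon}$ as long as $C\le\sqrt2$; with $C=1$ this reads $\{w:|w-1|<2-\sqrt2\}\subseteq\Omega_{\leftmoon}$. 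For each of the three classes I would first record $x_i(0)=\sqrt2-2<0$ together with $x_1(1)=6(2+d)(2+s)(2+q)>0$, $x_2(1)=6(2+m)(1+n)(2+q)>0$ and $x_3(1)=4(2+l)(2+q)>0$, which by the intermediate value theorem give a zero of $x_i$ in $(0,1)$; let $\rho_i$ be the smallest such zero.

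For $H^1_{b,c}$, the disc in \eqref{eq3.1} is centred at $1$, so the inclusion above shows $f\in\mathcal{S}^*_{\leftmoon}$ whenever
\[
\frac{r}{1-r^2}\left(\frac{dr^2+4r+d}{r^2+dr+1}+\frac{sr^2+4r+s}{r^2+sr+1}+\frac{qr^2+4r+q}{r^2+qr+1}\right)\le 2-\sqrt2 ,
\]
and multiplying through by the positive quantity $(1-r^2)(r^2+dr+1)(r^2+sr+1)(r^2+qr+1)$ turns this into $x_1(r)\le 0$ with $x_1$ the stated polynomial of degree $8$. Since $x_1(0)<0$ and $\rho_1$ is its least positive zero, $x_1(r)<0$ on $(0,\rho_1)$, hence $f\in\mathcal{S}^*_{\leftmoon}$ for $|z|=r\le\rho_1$. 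Sharpness comes from the extremal function $F_1$ of \eqref{eq0.8} (with $u=6b-4c\ge 0$, $v=4c-q\ge 0$, under the parameter constraints that make $F_1$ extremal): as computed in the proof of Theorem~\ref{th1}(i), at $z=-\rho_1$ the bound in the last display is attained, so $zF_1'(z)/F_1(z)=1-(2-\sqrt2)=\sqrt2-1$, a point of $\partial\Omega_{\leftmoon}$ because $|(\sqrt2-1)^2-1|=2\sqrt2-2=2|\sqrt2-1|$. The classes $H^2_{b,c}$ and $H^3_b$ are handled identically, starting from \eqref{eq2.3} and from \eqref{eq3.5}: clearing denominators in $\frac{r}{1-r^2}(\cdots)\le 2-\sqrt2$ produces $x_2(r)\le 0$ (degree $7$) and $x_3(r)\le 0$ (degree $6$), giving the inclusions for $|z|\le\rho_2$ and $|z|\le\rho_3$; the sharpness of $\rho_3$ follows from $F_3$ of \eqref{eq2.5} at $z=-\rho_3$, where again $zF_3'(z)/F_3(z)=\sqrt2-1\in\partial\Omega_{\leftmoon}$, while for $H^2_{b,c}$ (for which the statement makes no sharpness claim) the same role would be played by the function $F_2$ constructed in the proof of Theorem~\ref{th1}(ii).

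I do not expect a conceptual obstacle: the argument is entirely parallel to Theorems~\ref{th2}--\ref{th6} and rests only on the cited lune inclusion plus the disc estimates \eqref{eq0.5}, \eqref{eq2.3}, \eqref{eq02.2} already obtained from Lemma~\ref{lm1}. The one step requiring genuine care is the bookkeeping --- checking, coefficient by coefficient, that multiplying out each inequality reproduces exactly the polynomial $x_i(r)$ in the statement; this is a long but mechanical symbolic computation (most safely verified with a computer algebra system). It is also worth stressing that we use only the \emph{modulus} bounds of Lemma~\ref{lm1}, never the sharper real--part estimate of Lemma~\ref{lm2}, so no auxiliary restriction such as $r<1/3$ is needed and the single sign check $x_i(1)>0$ already locates $\rho_i$ in $(0,1)$. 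Finally, the value $\sqrt2-1$ furnished by the extremal functions serves two purposes at once: it certifies that $2-\sqrt2$ is the radius of the largest disc about $1$ lying in $\Omega_{\leftmoon}$, and it shows that none of the $\rho_i$ can be enlarged.
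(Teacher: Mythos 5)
Your proposal is correct and follows essentially the same route as the paper: the Gandhi--Ravichandran lune inclusion (with centre $1$, radius $2-\sqrt2$), the disc bounds \eqref{eq0.5}, \eqref{eq2.3}, \eqref{eq02.2} from Lemma \ref{lm1}, the intermediate value theorem with the same endpoint values $x_1(1)=6(2+d)(2+q)(2+s)$, $x_2(1)=6(2+m)(1+n)(2+q)$, $x_3(1)=4(2+l)(2+q)$, and sharpness via $F_1$ and $F_3$ at $z=-\rho_i$, where $zF'/F=\sqrt2-1$ lies on $\partial\Omega_{\leftmoon}$. Your observation that only the modulus bound is needed (no $r<1/3$ restriction) and that no sharpness is claimed for $H^2_{b,c}$ also matches the paper's treatment.
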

\begin{proof}
\begin{enumerate}[(i)]
    \item Note that $x_1(0)=\sqrt2-2 <0$ and $x_1(1)=6(2 + d) (2 + q) (2 + s)>0$ and thus in view of the intermediate value theorem, there exists a root  of the  equation $x_1(r)=0$  in the interval $(0,1)$. Let  $r=\rho_1\in(0, 1)$ be the smallest root of the equation $x_1(r)=0$.
      Gandhi and Ravichandran \cite[Lemma 2.1]{gandhi1} proved that for $\sqrt2-1<C\leq\sqrt2+1$,
      \begin{equation}\label{eq8.1}
     \{w \in \mathbb{C}: |w-C|<1-|\sqrt{2}-C|\} \subseteq \{w \in \mathbb{C}: |w^2-1|< 2|w|\}.
      \end{equation}
           As  the centre of the disc in \eqref{eq3.1} is 1,  by \eqref{eq8.1}, $f\in \mathcal{S}^*_{\leftmoon}$ if
       \begin{equation}\label{eq8.2}
       \frac{r}{(1-r^2)}\left( \frac{dr^2+4r+d}{r^2+dr+1}+\frac{sr^2+4r+s}{r^2+sr+1}+\frac{q r^2+4r+q}{r^2+q r+1}\right)\leq2-\sqrt2
       \end{equation}
       which is equivalent to $f\in \mathcal{S}^*_{\leftmoon}$ if  $x_1(r)\leq0$. Since $x_1(0)<0$ and $\rho_1$ is the smallest root of the equation $x_1(r)=0$,  $x_1(r)$ is an increasing function on $(0, \rho_1)$. In view of this  $f\in \mathcal{S}^*_{\leftmoon}$ for $|z|=r\leq\rho_1$.
       For $u=6b-4c\geq0$, $v=4c-q\geq0$, using \eqref{eq8.2},   the function $F_1(z)$ defined for the class $H^1_{b,c}$ in \eqref{eq0.8}  at $z=-\rho_1$, satisfies the  following equality
     \begin{align*}
   &\left|\left(\frac{zf_1'(z)}{f_1(z)}\right)^2-1\right|\\
    &=\left|\left(1-\frac{\rho_1}{(1-\rho_1^2)}\left( \frac{u\rho_1^2+4\rho_1+u}{\rho_1^2+u\rho_1+1}+\frac{v\rho_1^2+4\rho_1+v}{\rho_1^2+v\rho_1+1}+\frac{q \rho_1^2+4\rho_1+q}{\rho_1^2+q \rho_1+1}\right)\right)^2-1\right|\\
    &=|1-(2-\sqrt2)^2-1|=2(1-\sqrt2)=2\left|\frac{zf_1'(z)}{f_1(z)}\right|.
    \end{align*} Thus, the radius is sharp.

    \item A calculation shows that $x_2(0)=\sqrt2-2<0$ and $x_2(1)= 6(2+m)(1+n)(2+q)>0$. By  intermediate value theorem, there exists a root $r\in(0, 1)$ of the equation $x_2(r) = 0$. Let $\rho_2\in(0, 1)$  be the smallest root of the equation $x_2(r)=0$ and $f\in H^2_{b,c}$. In view of \eqref{eq8.1} and the fact centre of the disc in \eqref{eq2.3} is 1, $f\in \mathcal{S}^*_{\leftmoon}$ if
       \begin{equation*}
       \frac{r}{(1-r^2)}\left( \frac{mr^2+4r+m}{r^2+mr+1}+\frac{nr^2+2r+n}{nr+1}+\frac{q r^2+4r+q}{r^2+q r+1}\right)\leq2-\sqrt2
       \end{equation*}
    which is equivalent to $f\in \mathcal{S}^*_{\leftmoon}$ if  $x_2(r)\leq0$. Since $x_2(0)<0$ and $\rho_2$ is the smallest root of the equation $x_2(r)=0$,  $x_2(r)$ is an increasing function on $(0, \rho_2)$. Thus, $f\in \mathcal{S}^*_{\leftmoon}$ for $|z|=r\leq\rho_2$.

\item It is easy to see that $x_3(0)=\sqrt2-2<0$ and $x_3(1)=4(2+l)(2+q)>0$. By intermediate value
theorem, there exists a root $r\in(0, 1)$ of the equation $x_3(r) = 0$. Let  $\rho_3\in(0, 1)$ be the smallest root of the equation $x_3(r)=0$.
Since  the centre of the disc in \eqref{eq3.5} is 1,  by \eqref{eq8.1}, $f\in \mathcal{S}^*_{\leftmoon}$ if
       \begin{equation}\label{eq8.4}
       \frac{r}{1-r^2} \left(\frac{l r^2+4 r+l}{r^2+l r+1}+\frac{q r^2+4 r+q}{r^2+q r +1}\right)\leq2-\sqrt2
       \end{equation}
    which is equivalent to $f\in \mathcal{S}^*_{\leftmoon}$ if  $x_3(r)\leq0$. Since $x_3(0)<0$ and $\rho_3$ is the smallest root of the equation $x_3(r)=0$,  $x_3(r)$ is an increasing function on $(0, \rho_3).$ This proves that $f\in \mathcal{S}^*_{\leftmoon}$ for $|z|=r\leq\rho_3$.

 The result is sharp for the function $F_3$ defined for the class $H^3_{b}$ in \eqref{eq2.5}.  At $z=-\rho_3$, and for $u=4b-q\geq0$,
 it follows from \eqref{eq8.4} that
\begin{align*}
\left|\left(\frac{zF_3'(z)}{F_3(z)}\right)^2-1\right|& = \left|\left(1-\frac{\rho_3}{1-\rho_3^2} \left(\frac{u \rho_3^2+4 \rho_3+u}{\rho_3^2+u \rho_3+1}+\frac{q \rho_3^2+4 \rho_3+q}{\rho_3^2+q \rho_3 +1}\right)\right)^2-1\right|\\&=2\left|\frac{zF_3'(z)}{F_3(z)}\right|. \qedhere
\end{align*}
\end{enumerate}
\end{proof}
\begin{remark}
 For $b=1$ and $c=1$ and $q=2$,  Theorem \ref{th7} yields the result \cite[Theorem 7, p. 14]{lecko}.
\end{remark}

\noindent
Kumar \emph{et al.}\cite{kumar} introduced the class of starlike functions,  defined by $\mathcal{S}^*_R=\mathcal{S}^*(\psi(z))$, consisting of functions associated with a rational function $\psi(z)=1+{z(k+z)}/{(k(k-z))}$, where $k=\sqrt{2}+1.$ The following theorem yields  $\mathcal{S}^*_R$  radii.
\begin{theorem}\label{th8}
The  $\mathcal{S}_{R}^*$ radii for the classes $H^1_{b,c}$,  $H^2_{b,c}$ and $H^3_{b}$ are as follows:
\begin{enumerate}[(i)]
    \item For the class $H^1_{b,c}$, the sharp $\mathcal{S}_{R}^*$ radius $\rho_1\in(0, 1)$ is the smallest  root  of the equation $x_1(r)=0$, where\\
    $x_1(r)=(2 \sqrt{2}-3)+2 (\sqrt{2}-1) (d+q+s) r+(6+4\sqrt{2}+(2 \sqrt{2}-1) (q s+d (q+s))) r^2+2 ((4+\sqrt{2}) (q+s)+d (4+\sqrt{2}+\sqrt{2} q s)) r^3+8 (3+q s+d (q+s)) r^4-2 ((-7+\sqrt{2}) (q+s)+d (-7+\sqrt{2}+(-3+\sqrt{2}) q s)) r^5+(18-4 \sqrt{2}+(5-2\sqrt{2}) (q s+d (q+s))) r^6-2 ((-2+\sqrt{2}) (d+q+s)) r^7+(3-2 \sqrt{2}) r^8$.
    \item For the class $H^2_{b,c}$, the  $\mathcal{S}_{R}^*$ radius $\rho_2\in(0, 1)$ is the smallest  root  of the equation $x_2(r)=0$, where\\
    $x_2(r)=(2\sqrt2-3)+2 (\sqrt{2}-1) (m+n+q) r+(7+2 \sqrt{2}+(2 \sqrt{2}-1) (n q+m (n+q))) r^2+(8 (m+q)+2 n (4+\sqrt{2}+\sqrt{2} m q)) r^3+(15-2 \sqrt{2}+7 m q-2 \sqrt{2} m q+8 n (m+q)) r^4-2 ((-7+\sqrt{2}) n+(-3+\sqrt{2}) q+(-3+\sqrt{2}) m (1+n q)) r^5+(5-2 \sqrt{2}) (1+n (m+q)) r^6-2 ((-2+\sqrt{2}) n) r^7$.
    \item For the class $H^3_{b}$, the sharp $\mathcal{S}_{R}^*$ radius $\rho_3\in(0, 1)$ is the smallest  root  of the equation $x_3(r)=0$, where\\
    $x_3(r)=(2\sqrt{2}-3)+2 (\sqrt{2}-1) (l+q) r+(5+2 \sqrt{2}+(-1+2\sqrt{2}) l q) r^2+6 (l+q) r^3+(11-2\sqrt{2}+(5-2 \sqrt{2}) l q) r^4-2 ((-2+\sqrt{2}) (l+q)) r^5+(3-2\sqrt{2}) r^6.$
\end{enumerate}
\end{theorem}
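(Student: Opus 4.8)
The plan is to reuse, essentially verbatim, the argument that runs through Theorems \ref{th1}--\ref{th7}. For each of the three classes Section~2 already supplies a disc centred at $1$ that contains the image of $zf'(z)/f(z)$ on $|z|=r$ — namely the right-hand sides of \eqref{eq3.1}, \eqref{eq2.3} and \eqref{eq3.5}. One then invokes a disc-inclusion characterisation of membership in $\mathcal{S}^*_R$, converts the resulting inequality into the polynomial inequality $x_i(r)\le 0$ by clearing denominators, and identifies the radius as the smallest positive root of $x_i$ in $(0,1)$.

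Concretely, $\psi(z)=1+z(k+z)/(k(k-z))$ with $k=\sqrt2+1$ maps $\Delta$ onto a region $\Omega_R$, symmetric in the real axis, whose real trace runs from its leftmost point $\psi(-1)=2\sqrt2-2$ to $\psi(1)=2$. The inclusion lemma of Kumar \emph{et al.}\cite{kumar} — the one giving, for $C$ in a neighbourhood of $1$, the largest disc $\{w\in\C:|w-C|<C-(2\sqrt2-2)\}\subseteq\Omega_R$, touching $\partial\Omega_R$ at $\psi(-1)$ — shows that $f\in\mathcal{S}^*_R$ on $|z|\le r$ as soon as the right-hand side of \eqref{eq3.1} (resp. \eqref{eq2.3}, \eqref{eq3.5}) is at most $1-(2\sqrt2-2)=3-2\sqrt2$. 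For part (i) this reads $\frac{r}{1-r^2}\bigl(\frac{dr^2+4r+d}{r^2+dr+1}+\frac{sr^2+4r+s}{r^2+sr+1}+\frac{qr^2+4r+q}{r^2+qr+1}\bigr)\le 3-2\sqrt2$; multiplying through by $(1-r^2)(r^2+dr+1)(r^2+sr+1)(r^2+qr+1)$ and collecting powers of $r$ yields precisely $x_1(r)\le 0$ with $x_1$ the stated degree-$8$ polynomial, for which $x_1(0)=-(3-2\sqrt2)=2\sqrt2-3<0$ and $x_1(1)=6(2+d)(2+q)(2+s)>0$. The intermediate value theorem gives a root in $(0,1)$; taking $\rho_1$ to be the smallest one, $x_1<0$ on $(0,\rho_1)$ since it cannot vanish before $\rho_1$, so $f\in\mathcal{S}^*_R$ for $|z|\le\rho_1$. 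Sharpness is witnessed by $F_1$ of \eqref{eq0.8}: with $u=6b-4c\ge0$, $v=4c-q\ge0$ the bound \eqref{eq3.1} is attained at $z=-\rho_1$, so $zF_1'(z)/F_1(z)=1-(3-2\sqrt2)=2\sqrt2-2=\psi(-1)\in\partial\Omega_R$, and no larger radius can work.

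Parts (ii) and (iii) are identical in structure, with \eqref{eq2.3} and \eqref{eq3.5} replacing \eqref{eq3.1}. For (iii) the common denominator is $(1-r^2)(r^2+lr+1)(r^2+qr+1)$, producing the degree-$6$ polynomial $x_3$ with $x_3(0)=2\sqrt2-3<0$ and $x_3(1)=4(2+l)(2+q)>0$; $\rho_3$ is its smallest root, and sharpness follows from $F_3$ of \eqref{eq2.5} at $z=-\rho_3$ with $u=4b-q\ge0$, where again $zF_3'(z)/F_3(z)=2\sqrt2-2=\psi(-1)$. For (ii) the common denominator $(1-r^2)(r^2+mr+1)(nr+1)(r^2+qr+1)$ gives the degree-$7$ polynomial $x_2$ with $x_2(0)=2\sqrt2-3<0$ and $x_2(1)=6(2+m)(1+n)(2+q)>0$; here one asserts only the containment of $H^2_{b,c}$ in $\mathcal{S}^*_R$ for $|z|\le\rho_2$, $\rho_2$ being the smallest root of $x_2$.

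The only real difficulty is bookkeeping. First, one must cite the lemma of \cite{kumar} in exactly the form that yields the threshold $3-2\sqrt2$ and check that $C=1$ lies in its admissible range, so that the disc of radius $3-2\sqrt2$ about $1$ is the \emph{largest} one inside $\Omega_R$ and meets $\partial\Omega_R$ at $\psi(-1)$ — this is what fixes the sharp boundary value at $2\sqrt2-2$ rather than at some other point of the non-convex region $\Omega_R$. Second, one must verify that clearing denominators in each of the three inequalities reproduces the displayed polynomials coefficient by coefficient, including the irrational coefficients $2\sqrt2-3$, $2\sqrt2-1$, $5-2\sqrt2$, and so on; a spot-check of the low-order terms (e.g. the coefficient of $r$ in $x_1$ is $(d+s+q)\bigl(1-(3-2\sqrt2)\bigr)=2(\sqrt2-1)(d+s+q)$, matching the statement) makes this routine. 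Everything else is a transcription of the corresponding steps in Theorems \ref{th1}--\ref{th7}.
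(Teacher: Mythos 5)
Your proposal is correct and follows essentially the same route as the paper: the disc bounds \eqref{eq3.1}, \eqref{eq2.3}, \eqref{eq3.5} centred at $1$, the Kumar--Ravichandran inclusion $\{w:|w-C|<C-2(\sqrt2-1)\}\subseteq\psi(\Delta)$ applied at $C=1$ to get the threshold $3-2\sqrt2$, the intermediate value theorem for the smallest root of $x_i$, and sharpness via $F_1$ and $F_3$ at $z=-\rho_i$ hitting $\psi(-1)=2(\sqrt2-1)$, with no sharpness claim in part (ii), exactly as in the paper.
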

\begin{proof}
\begin{enumerate}[(i)]
    \item Note that $x_1(0)=2 \sqrt{2}-3<0$ and $x_1(1)=6(2 + d) (2 + q) (2 + s)>0$ and thus in view of the intermediate value theorem, there exists a root  of the  equation $x_1(r)=0$  in the interval $(0,1)$. Let  $r=\rho_1\in(0, 1)$ be the smallest root of the equation $x_1(r)=0$.
      For $2(\sqrt{2}-1)<C \le\sqrt{2},$ Kumar \emph{et al.}\cite{kumar}  proved that
      \begin{equation}\label{eq9.1}
     \{w \in \mathbb{C}: |w-C|<C-2(\sqrt{2}-1) \} \subseteq \psi(\Delta).
      \end{equation}
           As the centre of the disc in \eqref{eq3.1} is 1,  by \eqref{eq9.1} $f\in \mathcal{S}^*_{\leftmoon}$ if
       \begin{equation}\label{eq9.2}
       \frac{r}{(1-r^2)}\left( \frac{dr^2+4r+d}{r^2+dr+1}+\frac{sr^2+4r+s}{r^2+sr+1}+\frac{q r^2+4r+q}{r^2+q r+1}\right)\leq3-2\sqrt2
       \end{equation}
       which is equivalent to $f\in \mathcal{S}^*_{R}$ if  $x_1(r)\leq0$. Since $x_1(0)<0$ and $\rho_1$ is the smallest root of the equation $x_1(r)=0$,  $x_1(r)$ is an increasing function on $(0, \rho_1)$. In view of this  $f\in \mathcal{S}^*_{R}$ for $|z|=r\leq\rho_1$.
       For $u=6b-4c\geq0$, $v=4c-q\geq0$, using \eqref{eq9.2},   the function $F_1(z)$ defined for the class $H^1_{b,c}$ in \eqref{eq0.8}  at $z=-\rho_1$, satisfies the  following equality
     \begin{align*}
    \left|\frac{zF_1'(z)}{F_1(z)}\right|
    &=\left|1-\frac{\rho_1}{(1-\rho_1^2)}\left( \frac{u\rho_1^2+4\rho_1+u}{\rho_1^2+u\rho_1+1}+\frac{v\rho_1^2+4\rho_1+v}{\rho_1^2+v\rho_1+1}+\frac{q \rho_1^2+4\rho_1+q}{\rho_1^2+q \rho_1+1}\right)\right|\\
    &=2(\sqrt2-1)= \psi(-1).
    \end{align*}
     Thus, the radius is sharp.

    \item A calculation shows that $x_2(0)=2\sqrt2-3<0$ and $x_2(1)= 6(2+m)(l+n)(2+q)>0$. By  intermediate value theorem, there exists a root $r\in(0, 1)$ of the equation $x_2(r) = 0$. Let $\rho_2\in(0, 1)$  be the smallest root of the equation $x_2(r)=0$ and $f\in H^2_{b,c}$. In view of \eqref{eq9.1} and the fact that   the centre of the disc in \eqref{eq2.3} is 1, $f\in \mathcal{S}^*_{R}$ if
       \begin{equation*}
       \frac{r}{(1-r^2)}\left( \frac{mr^2+4r+m}{r^2+mr+1}+\frac{nr^2+2r+n}{nr+1}+\frac{q r^2+4r+q}{r^2+q r+1}\right)\leq3-2\sqrt2
       \end{equation*}
    which is equivalent to $f\in \mathcal{S}^*_{R}$ if  $x_2(r)\leq0$. Since $x_2(0)<0$ and $\rho_2$ is the smallest root of the equation $x_2(r)=0$,  $x_2(r)$ is an increasing function on $(0, \rho_2)$. Thus, $f\in \mathcal{S}^*_{R}$ for $|z|=r\leq\rho_2$.

    \item It is easy to see that $x_3(0)=2\sqrt2-3<0$ and $x_3(1)=4(2+l)(2+q)>0$. By intermediate value
theorem, there exists a root $r\in(0, 1)$ of the equation $x_3(r) = 0$. Let  $\rho_3\in(0, 1)$ be the smallest root of the equation $x_3(r)=0$.
Since  the centre of the disc in \eqref{eq3.5} is 1,  by \eqref{eq9.1}, $f\in \mathcal{S}^*_{R}$ if
       \begin{equation}\label{eq9.4}
       \frac{r}{1-r^2} \left(\frac{l r^2+4 r+l}{r^2+l r+1}+\frac{q r^2+4 r+q}{r^2+q r +1}\right)\leq3-2\sqrt2
       \end{equation}
    which is equivalent to $f\in \mathcal{S}^*_{R}$ if  $x_3(r)\leq0$. Since $x_3(0)<0$ and $\rho_3$ is the smallest root of the equation $x_3(r)=0$,  $x_3(r)$ is an increasing function on $(0, \rho_3).$ This proves that $f\in \mathcal{S}^*_{R}$ for $|z|=r\leq\rho_3$.

 The result is sharp for the function $F_3$ defined for the class $H^3_{b}$ in \eqref{eq2.5}.  At $z=-\rho_3$, and for $u=4b-q\geq0$,
 it follows from \eqref{eq9.4} that
\begin{align*}
\left|\frac{zF_3'(z)}{F_3(z)}\right|& = \left|1-\frac{\rho_3}{1-\rho_3^2} \left(\frac{u \rho_3^2+4 \rho_3+u}{\rho_3^2+u \rho_3+1}+\frac{q \rho_3^2+4 \rho_3+q}{\rho_3^2+q \rho_3 +1}\right)\right|= \psi(-1). \qedhere
\end{align*}
\end{enumerate}
\end{proof}
\begin{remark}
 Substituting  $b=1$ and $c=1$ and $q=2$ in  Theorem \ref{th8}, we get the result \cite[Theorem 8, p. 15]{lecko}.
\end{remark}

\noindent
\noindent
In 2020, Wani and Swaminathan \cite[Lemma 2.2]{wani} introduced the class $\mathcal{S}^*_{Ne}=\mathcal{S}^*(1+z-(z^3/3))$ consisting of functions associated  with a nephroid. So, a function $f\in\mathcal{S}^*_{Ne}$ if and only if $zf'/f$  maps the open unit disc $\Delta$ onto the interior of a two cusped kidney shaped curve $\Omega_{Ne}:=\{u+iv:((u-1)^2+v^2-4/9)^3-4v^2/3 <0\}.$
\begin{theorem}\label{th9}
The sharp $\mathcal{S}_{Ne}^*$ radii for the classes $H^1_{b,c}$,  $H^2_{b,c}$ and $H^3_{b}$ are as follows:
\begin{enumerate}[(i)]
    \item For the class $H^1_{b,c}$, the sharp $\mathcal{S}_{Ne}^*$ radius $\rho_1\in(0, 1)$ is the smallest  root  of the equation $x_1(r)=0$, where\\
    $x_1(r)=-2+(d+q+s) r+4 (8+q s+d (q+s)) r^2+(31 (q+s)+d (31+7 q s)) r^3+24 (3+q s+d (q+s)) r^4+(35 (q+s)+d (35+11 q s)) r^5+8 (5+q s+d (q+s)) r^6+5 (d+q+s) r^7+2 r^8$.
    \item For the class $H^2_{b,c}$, the sharp $\mathcal{S}_{Ne}^*$ radius $\rho_2\in(0, 1)$ is the smallest  root  of the equation $x_2(r)=0$, where\\
    $x_2(r)= -2+(m+n+q) r+4 (7+n q+m (n+q)) r^2+(31 n+24 q+m (24+7 n q)) r^3+(38+24 m n+14 m q+24 n q) r^4+(35 n+11 q+11 m (1+n q)) r^5+8 (1+n (m+q)) r^6+5 n r^7.$
    \item For the class $H^3_{b}$, the sharp $\mathcal{S}_{Ne}^*$ radius $\rho_3\in(0, 1)$ is the smallest  root  of the equation $x_3(r)=0$, where\\
    $x_3(r)=-2+(2+l) r+(22+8 l) r^2+18 (2+l) r^3+(26+16 l) r^4+5 (2+l) r^5+2 r^6$.
\end{enumerate}
\end{theorem}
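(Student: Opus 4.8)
The plan is to establish Theorem~\ref{th9} by the same three--step scheme used in the proofs of Theorems~\ref{th1}--\ref{th8}: reduce membership in $\mathcal{S}^{*}_{Ne}$ to a disc--inclusion criterion, convert that criterion into the sign of a single polynomial, and then verify sharpness on the extremal functions constructed earlier.

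First I would recall the disc--inclusion lemma for the nephroid of Wani and Swaminathan \cite{wani} (the analogue of the cardioid inclusion \eqref{eq6.1}): the boundary curve $1+z-z^{3}/3$ has its two cusps at $1/3$ and $5/3$ on the real axis, and a disc centred at $1$ lies inside $\Omega_{Ne}$ as soon as its radius is at most $2/3$. For $f\in H^{1}_{b,c}$, inequality \eqref{eq3.1} confines $zf'(z)/f(z)$ to the disc centred at $1$ of radius $\frac{r}{1-r^{2}}\bigl(\frac{dr^{2}+4r+d}{r^{2}+dr+1}+\frac{sr^{2}+4r+s}{r^{2}+sr+1}+\frac{qr^{2}+4r+q}{r^{2}+qr+1}\bigr)$, so $f\in\mathcal{S}^{*}_{Ne}$ whenever this quantity does not exceed $2/3$; clearing denominators rewrites ``radius $\le 2/3$'' as $2(1-r^{2})(r^{2}+dr+1)(r^{2}+sr+1)(r^{2}+qr+1)-3r(\cdots)\ge 0$, i.e.\ $x_{1}(r)\le 0$ with $x_{1}$ as displayed. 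The same manipulation applied to the disc in \eqref{eq2.3} for $H^{2}_{b,c}$ and to \eqref{eq3.5} for $H^{3}_{b}$ produces $x_{2}$ and $x_{3}$; note that for $H^{2}_{b,c}$ only the modulus estimate \eqref{eq2.3}, valid on all of $(0,1)$, is needed, so Lemma~\ref{lm2} is not invoked here. Since the governing constant is again $2/3$ (exactly as for the cardioid), the polynomials $x_{1},x_{2},x_{3}$ coincide with those of Theorem~\ref{th5}.

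Next I would locate the radii: direct substitution gives $x_{1}(0)=x_{2}(0)=x_{3}(0)=-2<0$ and $x_{i}(1)>0$ (a product of positive factors, as in Theorem~\ref{th5}), so by the intermediate value theorem $x_{i}$ has a zero in $(0,1)$; let $\rho_{i}$ be the smallest one. Because $x_{i}(0)<0$ and $\rho_{i}$ is the first zero, $x_{i}\le 0$ on $(0,\rho_{i}]$ (indeed $x_{i}$ is increasing there, just as in the earlier proofs), so the reduction above yields $f\in\mathcal{S}^{*}_{Ne}$ for $|z|=r\le\rho_{i}$. For sharpness I would evaluate, at $z=-\rho_{i}$, the extremal functions already exhibited: $F_{1}$ of \eqref{eq0.8} for $H^{1}_{b,c}$ with $u=6b-4c\ge 0$, $v=4c-q\ge 0$; $f_{2}$ of \eqref{eq1.3} for $H^{2}_{b,c}$ exactly as in the proof of Theorem~\ref{th6}; and $F_{3}$ of \eqref{eq2.5} for $H^{3}_{b}$ with $u=4b-q\ge 0$. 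There the relevant sum equals $2/3$, so $zf'(z)/f(z)$ equals $1-2/3=1/3$ (or $1+2/3=5/3$ in the $H^{2}$ case), which is a cusp of $\Omega_{Ne}$; and indeed substituting $(u,v)=(1/3,0)$ (respectively $(5/3,0)$) into the defining inequality gives $\bigl((1/3-1)^{2}-4/9\bigr)^{3}=0$, so $1/3\in\partial\Omega_{Ne}$ and $\rho_{i}$ cannot be enlarged.

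The only genuinely delicate point --- and the sole departure from the convex--region arguments in Theorems~\ref{th1}--\ref{th8} --- is that $\Omega_{Ne}$ is \emph{not} convex, so one must be sure that the disc of radius $2/3$ about $1$ really does miss the two concave pockets flanking the cusps; this is precisely the content of the Wani--Swaminathan inclusion lemma, and the sharpness then rests on the limiting value being a cusp (a true boundary point of $\Omega_{Ne}$) rather than a point on a smooth boundary arc. Everything else --- expanding $2(1-r^{2})(\cdots)-3r(\cdots)$ into the stated octic (for $H^{1}_{b,c}$), heptic (for $H^{2}_{b,c}$) and sextic (for $H^{3}_{b}$) polynomials, together with the endpoint sign checks --- is the routine bookkeeping already performed in Theorem~\ref{th5}, so I would not repeat it.
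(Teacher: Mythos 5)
Your proposal is correct and follows essentially the same route as the paper: the Wani--Swaminathan inclusion with centre $1$ and radius $2/3$ applied to the discs in \eqref{eq3.1}, \eqref{eq2.3}, \eqref{eq3.5}, the intermediate value theorem at $r=0,1$, and sharpness via the extremal functions (the paper, like you, uses only the modulus bound \eqref{eq2.3} for $H^2_{b,c}$, not Lemma \ref{lm2}). The only cosmetic difference is in sharpness for parts (i) and (iii): you evaluate $F_1$ and $F_3$ at $z=-\rho$, landing on the cusp $1/3$, while the paper evaluates $f_1$ and $f_3$ (and $f_2$), landing on the cusp $5/3$; both are boundary points of $\Omega_{Ne}$, so either choice works.
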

\begin{proof}
\begin{enumerate}[(i)]
    \item Note that $x_1(0)=-2<0$ and $x_1(1)=18(2 + d) (2 + q) (2 + s)>0$ and thus in view of the intermediate value theorem, there exists a root  of the  equation $x_1(r)=0$  in the interval $(0,1)$. Let  $r=\rho_1\in(0, 1)$ be the smallest root of the equation $x_1(r)=0$.
 For $1 \le C < 5/3,$ Wani and Swaminathan \cite[Lemma 2.2]{wani} had proved that
      \begin{equation}\label{eq10.1}
     \{w \in \mathbb{C}: |w-C|< 5/3-C\} \subseteq \Omega_{Ne}.
      \end{equation}
           As  the centre of the disc in \eqref{eq3.1} is 1,  by \eqref{eq10.1}, $f\in \mathcal{S}^*_{Ne}$ if
       \begin{equation}\label{eq10.2}
       \frac{r}{(1-r^2)}\left( \frac{dr^2+4r+d}{r^2+dr+1}+\frac{sr^2+4r+s}{r^2+sr+1}+\frac{q r^2+4r+q}{r^2+q r+1}\right)\leq\frac{2}{3}
       \end{equation}
       which is equivalent to $f\in \mathcal{S}^*_{Ne}$ if  $x_1(r)\leq0$. Since $x_1(0)<0$ and $\rho_1$ is the smallest root of the equation $x_1(r)=0$,  $x_1(r)$ is an increasing function on $(0, \rho_1)$. In view of this  $f\in \mathcal{S}^*_{Ne}$ for $|z|=r\leq\rho_1$.
       For $u=6b-4c\geq0$, $v=4c-q\geq0$, using \eqref{eq10.2},   the function $f_1(z)$ defined for the class $H^1_{b,c}$ in \eqref{eq0.6}  at $z=-\rho_1$, satisfies the  following equality
     \begin{align*}
    \left|\frac{zf_1'(z)}{f_1(z)}\right|
    &=\left|1+\frac{\rho_1}{(1-\rho_1^2)}\left( \frac{u\rho_1^2+4\rho_1+u}{\rho_1^2+u\rho_1+1}+\frac{v\rho_1^2+4\rho_1+v}{\rho_1^2+v\rho_1+1}+\frac{q \rho_1^2+4\rho_1+q}{\rho_1^2+q \rho_1+1}\right)\right|\\
    &= \frac{5}{3}.
    \end{align*}
    which belongs to the boundary of the region $\Omega_{Ne}$. Thus, the radius is sharp.

    \item A calculation shows that $x_2(0)=-2<0$ and $x_2(1)= 18(2+m)(l+n)(2+q)>0$. By  intermediate value theorem, there exists a root $r\in(0, 1)$ of the equation $x_2(r) = 0$. Let $\rho_2\in(0, 1)$  be the smallest root of the equation $x_2(r)=0$ and $f\in H^2_{b,c}$. In view of \eqref{eq10.1} and the fact that centre of the disc in \eqref{eq2.3} is 1, $f\in \mathcal{S}^*_{Ne}$ if
       \begin{equation}\label{eq10.3}
       \frac{r}{(1-r^2)}\left( \frac{mr^2+4r+m}{r^2+mr+1}+\frac{nr^2+2r+n}{nr+1}+\frac{q r^2+4r+q}{r^2+q r+1}\right)\leq\frac{2}{3}
       \end{equation}
    which is equivalent to $f\in \mathcal{S}^*_{Ne}$ if  $x_2(r)\leq0$. Since $x_2(0)<0$ and $\rho_2$ is the smallest root of the equation $x_2(r)=0$,  $x_2(r)$ is an increasing function on $(0, \rho_2)$. Thus, $f\in \mathcal{S}^*_{Ne}$ for $|z|=r\leq\rho_2$. To prove the sharpness, consider the function $f_2$ defined in \eqref{eq1.3}. For $u=5b-3c\geq0$, $v=3c-q\geq0$ and $z=-\rho_2$, it follows from \eqref{eq10.3} that
      \begin{align*}
      \left|\frac{zf_2'(z)}{f_2(z)}\right|&=\left|1+\frac{\rho_2}{(1-\rho_2^2)}\left( \frac{u\rho_2^2+4\rho_2+u}{\rho_2^2+u\rho_2+1}+\frac{v\rho_2^2+2\rho_2+v}{v\rho_2+1}+\frac{q \rho_2^2+4\rho_2+q}{\rho_2^2+q \rho_2+1}\right)\right|\\
      &= \frac{5}{3}
      \end{align*} which illustrates the sharpness.

     \item It is easy to see that $x_3(0)=-2<0$ and $x_3(1)=12(2+l)(2+q)>0$. By intermediate  value
theorem, there exists a root $r\in(0, 1)$ of the equation $x_3(r) = 0$. Let  $\rho_3\in(0, 1)$ be the smallest root of the equation $x_3(r)=0$.
Since  the centre of the disc in \eqref{eq3.5} is 1,  by \eqref{eq10.1}, $f\in \mathcal{S}^*_{Ne}$ if
       \begin{equation}\label{eq10.4}
       \frac{r}{1-r^2} \left(\frac{l r^2+4 r+l}{r^2+l r+1}+\frac{q r^2+4 r+q}{r^2+q r +1}\right)\leq\frac{2}{3}
       \end{equation}
    which is equivalent to $f\in \mathcal{S}^*_{Ne}$ if  $x_3(r)\leq0$. Since $x_3(0)<0$ and $\rho_3$ is the smallest root of the equation $x_3(r)=0$,  $x_3(r)$ is an increasing function on $(0, \rho_3).$ This proves that $f\in \mathcal{S}^*_{Ne}$ for $|z|=r\leq\rho_3$.

 The result is sharp for the function $f_3$ defined for the class $H^3_{b}$ in \eqref{eq0.10}.  At $z=-\rho_3$, and for $u=4b-q\geq0$,
 it follows from \eqref{eq10.4} that
\begin{align*}
\left|\frac{zf_3'(z)}{f_3(z)}\right|& = \left|1-\frac{\rho_3}{1-\rho_3^2} \left(\frac{u \rho_3^2+4 \rho_3+u}{\rho_3^2+u \rho_3+1}+\frac{q \rho_3^2+4 \rho_3+q}{\rho_3^2+q \rho_3 +1}\right)\right|= \frac{5}{3}.
\end{align*}
\end{enumerate}
\end{proof}
\begin{remark}
 For $b=1$ and $c=1$ and $q=2$, Theorem \ref{th9} reduces to  the result \cite[Theorem 10, p. 18]{lecko}.
\end{remark}

\noindent
In 2020,  the class $\mathcal{S}_{SG}^*=\mathcal{S}^*(2/(1+e^{-z}))$  that maps the open unit disc $\Delta$ onto a domain $\Delta_{SG}:=\{w \in \mathbb{C}: |\log(w/(2-w))|<1\}$   was introduced by   Goel and Kumar\cite{goel}.   Some results for the class  $\mathcal{S}_{SG}^*$ can be seen in \cite{jain}. The following theorem gives the sharp $\mathcal{S}_{SG}^*$ radii.
\begin{theorem}\label{th10}
The sharp $\mathcal{S}_{SG}^*$ radii for the classes $H^1_{b,c}$,  $H^2_{b,c}$ and $H^3_{b}$ are as follows:
\begin{enumerate}[(i)]
    \item For the class $H^1_{b,c}$, the sharp $\mathcal{S}_{SG}^*$ radius $\rho_1\in(0, 1)$ is the smallest  root  of the equation $x_1(r)=0$, where\\
    $x_1(r)= (1-e)+2 (d+q+s) r+(2 (7+5 e)+(3+e) (q s+d (q+s))) r^2+(2 (6+5 e) (q+s)+2 d (6+5 e+(2+e) q s)) r^3+8 (1+e) (3+q s+d (q+s)) r^4+2 ((5+6 e) (q+s)+d (5+q s+2 e (3+q s))) r^5+(2 (5+7 e)+(1+3 e) (q s+d (q+s))) r^6+2 e (d+q+s) r^7+(-1+e) r^8.$
    \item For the class $H^2_{b,c}$, the sharp $\mathcal{S}_{SG}^*$ radius $\rho_2\in(0, 1)$ is the smallest  root  of the equation $x_2(r)=0$, where\\
    $ x_2(r)=(1-e)+2 (m+n+q) r+(11+9 e+(3+e) (n q+m (n+q))) r^2+2 (4 (1+e) m+(6+5 e) n+4 (1+e) q+(2+e) m n q) r^3+(11+8 m n+3 m q+8 n q+e (13+8 m n+5 m q+8 n q)) r^4+2 ((5+6 e) n+q+2 e q+(1+2 e) m (1+n q)) r^5+(1+3 e) (1+n (m+q)) r^6+2 e n r^7.$
    \item For the class $H^3_{b}$, the sharp $\mathcal{S}_{SG}^*$ radius $\rho_3\in(0, 1)$ is the smallest  root  of the equation $x_3(r)=0$, where\\
    $x_3(r)=(1-e)+2 (l+q) r+(9+7 e+(3+e) l q) r^2+6 (1+e) (l+q) r^3+(7+l q+3 e (3+l q)) r^4+2 e (l+q) r^5+(-1+e) r^6.$
\end{enumerate}
\end{theorem}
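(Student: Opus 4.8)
The plan is to re-run the argument of Theorems \ref{th1}--\ref{th9}, now using the disc-inclusion lemma of Goel and Kumar \cite{goel} for the sigmoid domain $\Delta_{SG}=\{w\in\mathbb{C}:|\log(w/(2-w))|<1\}$ in place of inclusions such as \eqref{eq3.2}, \eqref{eq5.1} or \eqref{eq7.1}. The domain $\Delta_{SG}$ is symmetric about the real axis, and the largest disc centred at $1$ contained in it has radius $(e-1)/(e+1)$; its boundary touches $\partial\Delta_{SG}$ exactly at the real points $2/(1+e)$ and $2e/(1+e)$, which are the values of $2/(1+e^{-z})$ at $z=-1$ and $z=1$. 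The relevant instance of the Goel--Kumar inclusion at $C=1$ thus reads $\{w:|w-1|<(e-1)/(e+1)\}\subseteq\Delta_{SG}$. Since the discs appearing in \eqref{eq3.1} (class $H^1_{b,c}$), in \eqref{eq2.3} (class $H^2_{b,c}$) and in \eqref{eq3.5} (class $H^3_b$) are all centred at $1$, it suffices to force the radius of each of them to be at most $(e-1)/(e+1)$.

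For each part I would first record $x_i(0)=1-e<0$ and $x_i(1)>0$; a direct computation gives $x_1(1)=6(1+e)(2+d)(2+q)(2+s)$, $x_2(1)=6(1+e)(2+m)(1+n)(2+q)$ and $x_3(1)=4(1+e)(2+l)(2+q)$, so the intermediate value theorem produces a zero of $x_i$ in $(0,1)$; let $\rho_i$ be the smallest one. Next I would check that multiplying $\frac{r}{1-r^2}(\cdots)\le(e-1)/(e+1)$, with $(\cdots)$ the appropriate sum from \eqref{eq3.1}, \eqref{eq2.3} or \eqref{eq3.5}, through by the positive quantity $(e+1)(1-r^2)$ times the product of the denominators, and rearranging, gives precisely the polynomial inequality $x_i(r)\le0$ with the coefficients listed. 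Because $x_i(0)<0$ and $\rho_i$ is the smallest zero of $x_i$, we have $x_i<0$ on $(0,\rho_i)$, and hence $f\in\mathcal{S}_{SG}^*$ whenever $f$ belongs to the relevant class and $|z|=r\le\rho_i$.

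Sharpness is obtained by testing the extremal functions of Section 2: $F_1$ from \eqref{eq0.8} for $H^1_{b,c}$, $f_2$ from \eqref{eq1.3} for $H^2_{b,c}$, and $F_3$ from \eqref{eq2.5} for $H^3_b$, each evaluated at $z=-\rho_i$ under the sign normalisations $6b-4c\ge0,\ 4c-q\ge0$ (respectively $5b-3c\ge0,\ 3c-q\ge0$; respectively $4b-q\ge0$) that turn the inequalities \eqref{eq0.5}, \eqref{eq2.3}, \eqref{eq02.2} into equalities. At $z=-\rho_i$ the value $zf'(z)/f(z)$ then equals $1-(e-1)/(e+1)=2/(1+e)$ for $F_1$ and $F_3$, and $1+(e-1)/(e+1)=2e/(1+e)$ for $f_2$; in both cases $w/(2-w)$ equals $e^{-1}$ or $e$, so $|\log(w/(2-w))|=1$ and the value lies on $\partial\Delta_{SG}$, proving that none of the three radii can be enlarged.

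The main obstacle is purely computational: verifying that clearing denominators in $\frac{r}{1-r^2}(\cdots)\le(e-1)/(e+1)$ reproduces verbatim the displayed polynomials $x_1$ (of degree $8$), $x_2$ (of degree $7$) and $x_3$ (of degree $6$). The class $H^2_{b,c}$ demands the most care: its factor $g/(zp)\in\mathcal{P}(1/2)$ contributes the non-symmetric term $(nr^2+2r+n)/(nr+1)$ (Lemma \ref{lm1} with $\alpha=1/2$ and $b=n$, already built into \eqref{eq2.3}), so the denominator to be cleared there is the linear $1+nr$ rather than a quadratic, which is precisely why $x_2$ has degree $7$ while $x_1$ has degree $8$. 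Checking that $C=1$ lies in the admissible band of the Goel--Kumar inclusion is trivial, since the disc in play is centred exactly at $1$.
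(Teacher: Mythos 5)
Your proposal is correct and follows essentially the same route as the paper: the Goel--Kumar inclusion applied to the discs centred at $1$ from \eqref{eq3.1}, \eqref{eq2.3} and \eqref{eq3.5}, the same reduction to the polynomial inequality $x_i(r)\le 0$ with the intermediate value theorem locating $\rho_i$, and the same boundary-touching sharpness computation giving $|\log(w/(2-w))|=1$. The only immaterial deviation is in part (i), where you test sharpness with $F_1$ from \eqref{eq0.8} (value $2/(1+e)$) while the paper uses $f_1$ from \eqref{eq0.6} (value $2e/(1+e)$); both points lie on $\partial\Delta_{SG}$, so either choice works.
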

\begin{proof}
\begin{enumerate}[(i)]
    \item Note that $x_1(0)=1-e<0$ and $x_1(1)=6(1+e)(2 + d) (2 + q) (2 + s)>0$ and thus in view of the intermediate value theorem, there exists a root  of the  equation $x_1(r)=0$  in the interval $(0,1)$. Let  $r=\rho_1\in(0, 1)$ be the smallest root of the equation $x_1(r)=0$.
For $2/(1+e)<C<2e/(1+e),$ Goel and Kumar\cite{goel}  proved the following inclusion:
           \begin{equation}\label{eq11.1}
         \{w \in \mathbb{C}: |w-C|<r_{SG}\} \subset \Delta_{SG},\,\, \text{where} \,\,r_{SG}=\left(\frac{e-1}{e+1}\right)-|C-1|.
      \end{equation}
           As  the centre of the disc in \eqref{eq3.1} is 1,  by \eqref{eq11.1}, $f\in \mathcal{S}^*_{SG}$ if
       \begin{equation}\label{eq11.2}
       \frac{r}{(1-r^2)}\left( \frac{dr^2+4r+d}{r^2+dr+1}+\frac{sr^2+4r+s}{r^2+sr+1}+\frac{q r^2+4r+q}{r^2+q r+1}\right)\leq \frac{e-1}{e+1}
       \end{equation}
       which is equivalent to $f\in \mathcal{S}^*_{SG}$ if  $x_1(r)\leq0$. Since $x_1(0)<0$ and $\rho_1$ is the smallest root of the equation $x_1(r)=0$,  $x_1(r)$ is an increasing function on $(0, \rho_1)$. In view of this  $f\in \mathcal{S}^*_{SG}$ for $|z|=r\leq\rho_1$.
       For $u=6b-4c\geq0$, $v=4c-q\geq0$, using \eqref{eq11.2},   the function $f_1(z)$ defined for the class $H^1_{b,c}$ in \eqref{eq0.6}  at $z=-\rho_1$, satisfies the  following equality
     \begin{align}\label{eq11.5}
    \left|\log\left(
    \frac{\frac{zf_1'(z)}{f_1(z)}}{2-\frac{zf_1'(z)}{f_1(z)}}\right)\right|
    =\left|\log\left(
    \frac{1+\frac{e-1}{e+1}}{2-\left(1+\frac{e-1}{e+1}\right)}\right)\right|=\left|\log(e)\right|=1.
    \end{align}
     Thus, the radius is sharp.


    \item A calculation shows that $x_2(0)=1-e<0$ and $x_2(1)= 6(1+e)(2+m)(l+n)(2+q)>0$. By  intermediate value theorem, there exists a root $r\in(0, 1)$ of the equation $x_2(r) = 0$. Let $\rho_2\in(0, 1)$  be the smallest root of the equation $x_2(r)=0$ and $f\in H^2_{b,c}$. In view of \eqref{eq11.1} and the fact that  the centre of the disc in \eqref{eq2.3} is 1, $f\in \mathcal{S}^*_{SG}$ if
       \begin{equation}\label{eq11.3}
       \frac{r}{(1-r^2)}\left( \frac{mr^2+4r+m}{r^2+mr+1}+\frac{nr^2+2r+n}{nr+1}+\frac{q r^2+4r+q}{r^2+q r+1}\right)\leq \frac{e-1}{e+1}
       \end{equation}
    which is equivalent to $f\in \mathcal{S}^*_{SG}$ if  $x_2(r)\leq0$. Since $x_2(0)<0$ and $\rho_2$ is the smallest root of the equation $x_2(r)=0$,  $x_2(r)$ is an increasing function on $(0, \rho_2)$. Thus, $f\in \mathcal{S}^*_{SG}$ for $|z|=r\leq\rho_2$. To prove the sharpness, consider the function $f_2$ defined in \eqref{eq1.3}. For $u=5b-3c\geq0$, $v=3c-q\geq0$ and $z=-\rho_2$, the similar calculations  as in \eqref{eq11.5} together with \eqref{eq11.3} yields the result is sharp.

    \item It is easy to see that $x_3(0)=1-e<0$ and $x_3(1)=4(1+e)(2+l)(2+q)>0$. By intermediate value
theorem, there exists a root $r\in(0, 1)$ of the equation $x_3(r) = 0$. Let  $\rho_3\in(0, 1)$ be the smallest root of the equation $x_3(r)=0$.
Since  the centre of the disc in \eqref{eq3.5} is 1,  by \eqref{eq11.1}, $f\in \mathcal{S}^*_{SG}$ if
       \begin{equation*}
       \frac{r}{1-r^2} \left(\frac{l r^2+4 r+l}{r^2+l r+1}+\frac{q r^2+4 r+q}{r^2+q r +1}\right)\leq\frac{e-1}{1+e}
       \end{equation*}
    which is equivalent to $f\in \mathcal{S}^*_{SG}$ if  $x_3(r)\leq0$. Since $x_3(0)<0$ and $\rho_3$ is the smallest root of the equation $x_3(r)=0$,  $x_3(r)$ is an increasing function on $(0, \rho_3).$ This proves that $f\in \mathcal{S}^*_{SG}$ for $|z|=r\leq\rho_3$.

  At $z=-\rho_3$, and for $u=4b-q\geq0$, a calculation as in part(i) shows that the result is sharp for the function $F_3$ defined for the class $H^3_{b}$ in \eqref{eq2.5}
\end{enumerate}
    \end{proof}
\begin{remark}
Putting  $b=1$ and $c=1$ and $q=2$ in  Theorem \ref{th10}, we obtain the result \cite[Theorem 11, p. 19]{lecko}.
\end{remark}

\noindent
\section*{Declaration}
\noindent
The authors declare that they have no conflict of interest.

\end{document}